\newtheorem{theorem}{Theorem}[section]
\newtheorem{lemma}[theorem]{Lemma}
\newtheorem{prop}[theorem]{Proposition}
\newtheorem{cor}[theorem]{Corollary}
\theoremstyle{definition}
\newtheorem{definition}[theorem]{Definition}
\newtheorem{assumption}{Assumption}
\renewcommand{\subset}{\subseteq}
\renewcommand{\supset}{\supseteq}
\renewcommand{\epsilon}{\varepsilon}
\renewcommand{\nu}{v}
\newcommand{\ignore}[1]{}
\newcommand{\abs}[1]{\left|#1\right|}                   
\newcommand{\absf}[1]{|#1|}                             
\newcommand{\vnorm}[1]{\left\|#1\right\|}    
\newcommand{\Z}{\mathbb{Z}}                             
\newcommand{\E}{\mathbb{E}}
\newcommand{\R}{\mathbb{R}}
\newcommand{\C}{\mathbb{C}}
\newcommand{\eps}{\varepsilon}
\newcommand{\embolden}[1]{\textbf {#1}}
\newcommand{\flatsub}{\substack{\{B_{i}\}_{i=1}^{n+1}\,
\mathrm{is}\,\mathrm{a}\,\mathrm{flat}\,\mathrm{partition}\,\mathrm{of}\,\R^{n}\\ \mathrm{with}\,\mathrm{volumes}\,(a_{1},\ldots,a_{n+1})}}
\newcommand{\partsub}{\substack{\{B_{i}\}_{i=1}^{n+1}\,
\mathrm{is}\,\mathrm{a}\,\mathrm{partition}\,\mathrm{of}\,\R^{n}\\ \mathrm{with}\,\mathrm{volumes}\,(a_{1},\ldots,a_{n+1})}}
\newcommand{\partsubpin}{\substack{\{B_{p}\}_{p=1}^{n+1}\,
\mathrm{is}\,\mathrm{a}\,\mathrm{partition}\,\mathrm{of}\,\R^{n}\\ \mathrm{with}\,\mathrm{volumes}\,(a_{1},\ldots,a_{n+1})}}
\begin{document}

\title{Standard Simplices and Pluralities are Not the Most Noise Stable}

\thanks{S. H. was supported by NSF Graduate Research Fellowship DGE-0813964 and a Simons-Berkeley Research Fellowship.
Part of this work was completed while S. H. visited the Network Science and Graph Algorithms program at ICERM.  E. M. was supported by NSF grant DMS-1106999, NSF Grant CCF 1320105 and DOD ONR grant N000141110140.  J. N. was supported by NSF grant DMS-1106999 and DOD ONR grant N000141110140.  Part of this work was carried out while the authors were visiting the Real Analysis in Computer Science program at the Simons Institute for the Theory of Computing.}

\author{Steven Heilman}
\address{UCLA Department of Mathematics, Los Angeles, CA 90095-1555}
\email{heilman@math.ucla.edu}
\author{Elchanan Mossel}
\address{Statistics and Computer Science, University of California, Berkeley, CA  94720}
\email{mossel@stat.berkeley.edu}
\author{Joe Neeman}
\address{Mathematics and Electrical and Computer Engineering, University of Texas at Austin, Austin, TX 78712}
\email{joeneeman@gmail.com}

\date{\today}
\begin{abstract}
The Standard Simplex Conjecture and the Plurality is Stablest Conjecture are two conjectures stating that certain partitions are optimal with respect to Gaussian and discrete noise stability respectively.
These two conjectures are natural generalizations of the Gaussian noise stability result by Borell (1985) and the Majority is Stablest Theorem (2004). Here we show that the standard simplex is not the most stable partition in Gaussian space and that Plurality is not the most stable low influence partition in discrete space
for every number of parts $k \geq 3$, for every value $\rho \neq 0$ of the noise and for every prescribed measures for the different parts as long as they are not all equal to $1/k$.
Our results do not contradict the original statements of the Plurality is Stablest and Standard Simplex Conjectures in their original statements concerning partitions to sets of equal measure.
However, they indicate that if these conjectures are true, their veracity and their proofs will crucially rely on assuming that the sets are of equal measures, in stark contrast to Borell's result, the Majority is Stablest Theorem and many other results in isoperimetric theory.
Given our results it is natural to ask for (conjectured) partitions achieving the optimum noise stability.

\end{abstract}
\maketitle

\section{Introduction}

{\em Noise stability} is a natural concept which appears in the study of Gaussian processes, voting, percolation and theoretical computer science. The study of partitions of the space which are optimal with respect to noise stability may be viewed as a natural extension of isoperimetric theory; see e.g.~\cite[Chapter 8]{ledoux96}.

The basic case which was studied most extensively is the following: which partitions of a space into two parts (with given measures) maximize noise stability? The answer to this question follows the development of isoperimetric theory:  the surface-minimizing body in $\R^n$ with prescribed Lebesgue measure is a ball ~\cite{steiner38,wei67,levy51}.  In the sphere it is a cap, or geodesic ball ~\cite{levy51}  which in turn implies that in Gaussian space it is a half-space ~\cite{borell75,sudakov74,borell03}.  The answer to the noise stability question is analogous: half-spaces maximize the Gaussian noise stability among all sets of a given measure~\cite{borell85,mossel12,eldan13}.  Using an invariance principle, this implies that the majority functions maximize noise stability among all low-influence functions on the discrete cube~\cite{mossel10}. Informally, a common feature of all of the results above is that the geometric nature of the optimal partition remains the same no matter what the prescribed measure is (a half-space, a cap etc.).

\subsection{Partitions into more than two parts}
A much more challenging question deals with partitions into more than two parts. For the isoperimetric question,
it took more than $100$ years to prove the ``Double Bubble Theorem''~\cite{hutchings02} which determines the minimal surface area that encloses and separates two fixed volumes in
$\R^3$. The optimal partition is given by two spheres which intersect at a $120^{\circ}$ angle having a third spherical cap separating the two volumes. Such a partition is called a double bubble partition. It is further conjectured that multi-bubble partitions minimize the surface area when partitioning into $k$ parts in $\R^n$ as long as $k \leq n+1$~\cite[Proposition 2]{sullivan96}~\cite[p. 153]{morgan09}.
An analogous isoperimetric result was established in Gaussian space, i.e. $\R^n$ equipped
with the standard Gaussian density.  Building on the previous work on the Double Bubble Conjecture,
the authors of~\cite{corneli08} found the partition of $\R^n$ $(n \geq 2)$ into three parts,
each having Gaussian volume about~$\frac{1}{3}$, that minimizes the Gaussian surface area between the three volumes. Their work shows that the optimal partition is a standard simplex
partition, which can be seen as the limit of the double bubble partition scaled up around one point on the intersection.  The standard simplex partition is defined by taking a regular simplex with center $P$, so that each element of the partition is the cone of a facet of the simplex with common base point $P$.

The analogies with the isoperimetric problem led the authors of~\cite{isaksson11} to conjecture that standard simplices are optimal for Gaussian noise stability.

In discrete product spaces, the questions regarding noise stability are central in studying hardness of approximation and voting. The relevant question in this setting involves low-influence partitions.
Using a non-linear invariance principle it was shown that the majority function maximizes noise stability~\cite{mossel10} as conjectured both in the context of hardness of approximation~\cite{khot07} and in voting~\cite{kalai02}.
Since plurality is the natural generalization of majority, it was conjectured in~\cite{khot07} that  plurality is the most stable low-influence partition into $k \geq 3$ parts of $\{1,\ldots,k\}^n$. Furthermore it was shown in~\cite{isaksson11} that the standard simplex being optimal for Gaussian noise stability is equivalent to the fact that plurality is stablest.
Therefore, the authors of~\cite{khot07,isaksson11} conjectured that plurality is stablest and demonstrated a number of applications of this result in hardness of approximation and voting.

\subsection{Our Results}
In our main results we show that {\em the standard simplex is not the most stable partition in Gaussian space and that plurality is not the most stable low-influence partition in discrete space}
for every number of parts $k \geq 3$, for every value $\rho \neq 0$ of the noise stability and for every prescribed measures for the different parts as long as they are not all equal to $1/k$.

In other words, the optimal partitions for noise stability are of a different nature than the ones considered for partitions into three parts in isoperimetric theory. Thus, we now know that the extension of noise stability theory from two to three or more parts is very much different than the extension of isoperimetric theory from two to three or more parts.  Moreover, all existing proofs which optimize noise stability of two sets \cite{borell85,kindler12,isaksson11,mossel12,eldan13} must fail for more than three sets, since these proofs rely on the fact that a half-space optimizes noise stability with respect to any measure restriction.

\subsection{Proof Techniques}
The main new ingredient in our work is developed in Section \S\ref{secanalytic}, in particular, Lemma \ref{lemma4}.  Here it is shown that the Ornstein-Uhlenbeck operator applied to the indicator function of a simplicial cone becomes holomorphic when restricted to certain lines.  This holomorphicity condition, when combined with a first variation argument (i.e. an infinite dimensional perturbative argument of the first order), then shows that any simplicial cone can be perturbed in a volume-preserving manner to improve its noise stability.  Such a holomorphicity argument seems unavailable for the isoperimetric problem, since this argument uses the inherent nonlocality of the Ornstein-Uhlenbeck semigroup.  That is, this semigroup at a point $x$ computes an average over all points in Euclidean space.  On the other hand, the local arguments used in isoperimetric theory, e.g. in the investigation of solutions of mean curvature flow \cite{colding11,colding12,colding12a}, do not seem to be available in the noise stability setting.  More specifically, the area of a surface is equal to the sum of the areas of many small pieces of the that surface, so it suffices to change a small portion of that surface to improve the total surface area.  Yet, the noise stability of a set is not equal to the sum of the noise stabilities of many small pieces of the set, so the same logic does not apply in our setting.  From this perspective, the holomorphicity argument could be surprising, since it allows for a local perturbation argument for the nonlocal quantity of the noise stability.

An additional argument we use is the
the first variation argument within Lemma \ref{lemma1.1} which is more standard.
For readers who may not be very familiar with variational arguments, we recall the basic setup and provide some references.
When there is some quantity to minimize involving integrals, one wants to say that the function or set minimizing the integral has a zero derivative, in some sense.  Often the space of functions or sets  is infinite dimensional, so one has to be careful with what it means for the optimizer to have zero derivatives.

In our context, the notion of zero derivative of noise stability is then formalized using the first variation, as follows.  Let $A$ be a set of fixed volume and optimal noise stability, and let $\{A^{(t)}\}_{t\in(-1,1)}$ be a family of sets such that $A^{(0)}=A$ and such that the volume of $A^{(t)}$ is equal to the volume of $A$.  Then the rate of change of the noise stability of $A^{(t)}$ must be zero at $t=0$, assuming that the sets $A^{(t)}$ change in a smooth manner as $t$ changes.  We employ this general principle by starting with a set $A$ with smooth boundary, and then by smoothly perturbing the boundary of $A$ to maintain the desired volume constraint.  This argument is known as a normal variation argument, and it is standard within the literature of isoperimetric inequalities or of the calculus of variations.  For just a few of many possible references which are relevant here, see~\cite{colding11a,chokski07,chavel06}.  In fact, we are dealing with the noise stability of multiple sets, and within Lemma \ref{lemma1.1} we perturb two sets $A_{i},A_{j}$ simultaneously, while leaving the remaining sets intact.  Yet, the argument is similar to the case of one set, since we restrict our attention to sets which are simplicial cones.

\subsection{The Original Conjectures}
We note again that our results do not contradict the precise conjectures stated in~\cite{khot07,isaksson11} as these are stated in the case where the measures of all partition elements are exactly $1/k$. However, the authors of~\cite{khot07,isaksson11} gave no indication that they believe there is something special about the case of equal measured partitions -- rather, they made the conjectures that were needed for the applications presented in their paper.

\section{Definition and Statement of Main Results}
\subsection{Gaussian Noise Stability}


Let $n\geq1$, let $y=(y_{1},\ldots,y_{n})\in\R^{n}$, and define $d\gamma_{n}(y)\colonequals e^{-(y_{1}^{2}+\cdots+y_{n}^{2})/2}dy/(2\pi)^{n/2}$.   Let $\ell_{2}^{n}$ denote the $\ell_{2}$ norm on $\R^{n}$.  For $r>0$, define $B(y,r)\colonequals\{x\in\R^{n}\colon\vnorm{x-y}_{\ell_{2}^{n}}<r\}$.

\begin{definition}\label{OUDef}
Let $n\geq1$, $n\in\Z$.  Let $\rho\in(-1,1)$, $x\in\R^{n}$.  Let $f\colon\R^{n}\to[0,1]$.  The Ornstein-Uhlenbeck (or Bonami-Beckner, or noise) operator is defined by
$$T_{\rho}f(x)\colonequals\int_{\R^{n}}f(x\rho+y\sqrt{1-\rho^{2}}\,)d\gamma_{n}(y).$$
\end{definition}

\begin{definition}\label{partdef}
Let $A_{1},\ldots,A_{k}\subset\R^{n}$ be measurable, $k\leq n+1$.  We say that $\{A_{i}\}_{i=1}^{k}$ is a \textit{partition} of $\R^{n}$ if $\cup_{i=1}^{k}A_{i}=\R^{n}$, and $\gamma_{n}(A_{i}\cap A_{j})=0$ for all $i\neq j$, $i,j\in\{1,\ldots,k\}$.
\end{definition}


\begin{definition}\label{measdef}
Let $\{A_{i}\}_{i=1}^{k}$ be a partition of $\R^{n}$.  We say that $\{A_{i}\}_{i=1}^{k}$ \textit{has volumes} $(a_{1},\ldots,a_{k})$ if $\gamma_{n}(A_{i})=a_{i}$ for all $i=1,\ldots,k$.
\end{definition}

\begin{definition}[\embolden{Flat / Simplex Partitions}]\label{flatdef}
Let $\{A_{i}\}_{i=1}^{k}$ be a partition of $\R^{n}$.  We say that $\{A_{i}\}_{i=1}^{k}$ is a \textit{flat partition} if there exist  $y\in\R^{n}$ and $\{y_{i}\}_{i=1}^{k}\subset\R^{n}\setminus\{0\}$, such that
\begin{itemize}
\item
For all $i,j\in\{1,\ldots,k\}$, $i\neq j$, $y_i$ is not a positive multiple of $y_{j}$, and
\item
for all $i\in\{1,\ldots,k\}$, $A_i = y + \{x\in\R^{n}\colon\langle x,y_{i}\rangle=\max_{j=1,\ldots,k}\langle x,y_{j}\rangle\}$.
\end{itemize}
If $y=0$ we say that $\{A_{i}\}_{i=1}^{k}$ is \textit{centered}. If $y\neq0$, we say it is \textit{shifted}.

A \textit{standard simplex partition} is a flat partition where $\| y_i \|_2 = 1$ for all $i$ and $\langle y_i, y_j \rangle = -\frac{1}{k-1}$ for all $i \neq j$, $i,j\in\{1,\ldots,k\}$. Again, we call a standard simplex partition centered if $y=0$ and shifted if $y\neq0$.
\end{definition}


\begin{definition}[\embolden{Gaussian Noise Stability}]\label{noisedef}
The Gaussian noise stability of a partition $\{A_{i}\}_{i=1}^{k}$ is given by
\[
S_{\rho}(\{A_{i}\}_{i=1}^{k})
\colonequals\sum_{i=1}^{k}\int_{\R^{n}}1_{A_{i}}T_{\rho}1_{A_{i}}d\gamma_{n}.
\]
\end{definition}

Note the following probabilistic interpretation of noise stability.  Let $X=(X_{1},\ldots,X_{n})$ and let $Y=(Y_{1},\ldots,Y_{n})\in\R^{n}$ be standard Gaussian random vectors such that $\E(X_{i}Y_{j})=\rho1_{i=j}$ for all $i,j\in\{1,\ldots,n\}$.  Then for any $A\subset\R^{n}$, $\int 1_{A}T_{\rho}1_{A}d\gamma_{n}=\mathrm{Prob}((X,Y)\in A\times A)$.  So, Definition~\ref{noisedef} can be equivalently written as $\sum_{i=1}^{k}\mathrm{Prob}((X,Y)\in(A_{i},A_{i}))$.

\subsection{Optimal Partitions are not Flat}

As mentioned earlier, the result of \cite{corneli08} finds the partition of $\R^{n}$ into three sets of fixed Gaussian measures $a_{1},a_{2},a_{3}$ with $\abs{a_{i}-1/3}<.04$ for all $i=1,2,3$ and of minimum total Gaussian surface area.  This partition is always given by three $120$ degree sectors, so if the $a_i$ are not all $1/3$ then it is a shifted standard simplex partition.
This, along with the standard simplex conjecture of  \cite{mossel10} may suggest that standard simplex partitions are always optimal.
In our main result we prove that this is not the case:

\begin{theorem}\label{thm1}
Let $\{A_{i}\}_{i=1}^{n+1}\subset\R^{n}$ be a shifted flat partition with volumes $a\colonequals(a_{1},\ldots,a_{n+1})\neq(1/(n+1),\ldots,1/(n+1))$.  Let $0<\rho<1$. Then
\begin{equation}\label{zero0}
S_{\rho}(\{A_{i}\}_{i=1}^{n+1}) <
\sup_{\partsub}S_{\rho}(\{B_{i}\}_{i=1}^{n+1}).
\end{equation}
where the supremum is taken over all partitions.
Similarly for $-1<\rho<0$, we have that:
\begin{equation}\label{zero1.1}
S_{\rho}(\{A_{i}\}_{i=1}^{n+1}) >
\inf_{\partsub}S_{\rho}(\{B_{i}\}_{i=1}^{n+1}).
\end{equation}
 \end{theorem}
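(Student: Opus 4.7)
The plan is to show that if $\{A_i\}_{i=1}^{n+1}$ is a shifted flat partition whose volumes $(a_1,\ldots,a_{n+1})$ are not all equal to $1/(n+1)$, then there exists a smooth one-parameter family of volume-preserving perturbations $\{A_i^{(t)}\}_{i=1}^{n+1}$ with $A_i^{(0)}=A_i$ for which
\[
\left.\frac{d}{dt}\right|_{t=0} S_\rho(\{A_i^{(t)}\}_{i=1}^{n+1}) \neq 0.
\]
Given this, choosing the sign of $t$ appropriately produces partitions of prescribed volumes $(a_1,\ldots,a_{n+1})$ with strictly larger (when $\rho>0$) or strictly smaller (when $\rho<0$) noise stability than $\{A_i\}_{i=1}^{n+1}$, which yields both \eqref{zero0} and \eqref{zero1.1}.

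To build such a perturbation, I would first pick two indices $i\neq j$ with $a_i\neq a_j$, which exist because the volumes are not all equal. The shared interface $\partial A_i \cap \partial A_j$ lies in an affine hyperplane $H_{ij}$. I then move only this interface by a smooth normal velocity $t\psi$, supported away from the other faces, while leaving $A_k$ unchanged for $k\notin\{i,j\}$. Preserving $\gamma_n(A_i)$ (and hence automatically $\gamma_n(A_j)$) amounts to the single scalar constraint $\int_{\partial A_i\cap\partial A_j}\psi\,d\gamma_{n-1}=0$, which leaves an infinite-dimensional family of admissible $\psi$. Following the normal variation computation announced as Lemma \ref{lemma1.1}, the first variation becomes
\[
\left.\frac{d}{dt}\right|_{t=0} S_\rho(\{A_i^{(t)}\}) = 2\int_{\partial A_i\cap\partial A_j}\bigl(T_\rho 1_{A_i}-T_\rho 1_{A_j}\bigr)\psi\,d\gamma_{n-1}.
\]

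If this derivative vanished for every admissible $\psi$, then $T_\rho 1_{A_i}-T_\rho 1_{A_j}$ would be constant on $\partial A_i\cap\partial A_j$. Here the holomorphicity result announced as Lemma \ref{lemma4} enters: $T_\rho$ applied to the indicator of a simplicial cone extends holomorphically along appropriate complex lines inside $H_{ij}$. A holomorphic function that is constant on a real segment of positive length is constant throughout its connected domain of holomorphy, so the constancy would propagate; evaluating the analytic extension at distinguished points, say in the limit at infinity along suitable rays, or by extracting leading-order terms in $\rho$, would yield an explicit algebraic identity relating the opening angles of the cones $A_i,A_j$ and the shift vector $y$. That identity forces $a_i=a_j$, contradicting the choice of $i,j$, and hence the first variation cannot vanish for every admissible $\psi$.

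The main obstacle I expect is the final step: passing from holomorphic constancy of $T_\rho 1_{A_i}-T_\rho 1_{A_j}$ along a line to the equality $a_i=a_j$. Lemma \ref{lemma4} only supplies holomorphicity along carefully chosen lines, so one must identify a privileged direction in $H_{ij}$ on which the extension is valid \emph{and} on which enough explicit information (e.g.\ asymptotics in terms of Gaussian error functions at complex arguments) is available to recover the volumes. The shift $y\neq 0$ is what breaks the natural symmetry that trivially kills the first variation for a centered standard simplex with equal volumes, and disentangling its effect from the cone geometry is where the argument becomes genuinely delicate.
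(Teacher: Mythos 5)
Your high-level strategy — normal variation plus a non-vanishing first variation obtained from the holomorphicity of $T_\rho$ along lines in the shared interface — is the same as the paper's (Lemma~\ref{lemma1.1} combined with Lemma~\ref{lemma4}). You also correctly locate the gap: you cannot get from constancy of $T_\rho(1_{A_i}-1_{A_j})$ on the interface to a contradiction. Here is precisely what is missing.

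The crucial content of Lemma~\ref{lemma4} is not just part~(b) (holomorphic extension along lines) but part~(a), an \emph{explicit computation} of the two one-sided limits along a line $L=\{cN+tw:t\in\R\}$ lying in the interface hyperplane $\Pi=\{x:\langle x,N\rangle=c\}$. Because $A_i,A_j$ are polyhedral cones with a common facet, as $t\to+\infty$ the picture at $cN+tw$ converges to the two-half-space picture determined by $\Pi$, giving
\[
\lim_{t\to\infty} T_\rho(1_{A_i}-1_{A_j})(cN+tw)=2\,\mathrm{sign}(c)\cdot\gamma_1\Bigl(\bigl[0,\tfrac{|c|(1-\rho)}{\sqrt{1-\rho^2}}\bigr)\Bigr),
\]
while as $t\to-\infty$ the relevant Gaussian mass escapes both $A_i$ and $A_j$, so the limit is $0$. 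These two limits agree \emph{if and only if} $c=0$. This is the lever: a function that is holomorphic on the whole line and constant on a positive-length ray must be constant on the line, hence must have equal limits at $\pm\infty$; so if the partition is shifted ($c\neq 0$ for some pair $i,j$), constancy is impossible and Lemma~\ref{lemma1.1} applies. Your proposed ``algebraic identity relating the opening angles of the cones $A_i,A_j$ and the shift vector $y$'' is not what the asymptotics produce; they see only $c$, i.e.\ the signed distance of the interface hyperplane to the origin, not the opening angles or the volumes. Similarly, ``extracting leading-order terms in $\rho$'' is a red herring — the argument works at fixed $\rho$.

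The remaining piece, deducing $a_i=a_j$ once one knows the partition is \emph{centered}, is a separate and much more elementary step (Corollary~\ref{cor4} in the paper). When $c=0$, the apex $0$ lies in $A_i\cap A_j$, and since each $A_p$ is a cone with apex at the origin, $T_\rho 1_{A_p}(0)=\int 1_{A_p}(y\sqrt{1-\rho^2})\,d\gamma_n(y)=\gamma_n(A_p)$ by scale-invariance. Constancy forces the value at $0$ to equal the limit $0$ at infinity, so $a_i-a_j=T_\rho(1_{A_i}-1_{A_j})(0)=0$. The paper therefore proceeds in two stages (shifted $\Rightarrow$ not optimal; centered-with-unequal-volumes $\Rightarrow$ not optimal), rather than the one-shot ``constancy $\Rightarrow a_i=a_j$'' you envisaged — and the explicit Gaussian limit in Lemma~\ref{lemma4}(a), not a relationship between angles and volumes, is what makes the first stage go through.
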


Our results indicate in particular that there is no straightforward generalization of the any of the methods of \cite{borell85,kindler12,isaksson11,mossel12,eldan13} that is applicable to the study of noise stability for more than two sets.  As mentioned earlier, Theorem~\ref{thm1} also indicates a dramatic difference between the noise stability problem with two sets (where the optimal partitions are always simplex partitions) and three or more sets. This also indicates a difference between the isoperimetric problem with three sets (where in all the known cases the partitions are simplex partitions) and the noise stability problem.

\subsection{Discrete Noise Stability}
Using the invariance principle~\cite{mossel10}, \cite[Theorem 1.10]{isaksson11}, it is by now standard to deduce discrete analogues of our main theorem.
For simplicity we formulate one special case for partitions into $3$ parts with $0<\rho<1$ in Corollary~\ref{cor:plu} below, which only requires the Central Limit Theorem.
\begin{definition}[\embolden{Plurality}]\label{plurdef}
 A {\em plurality function} $f_{n} : \{1,2,3\}^n \to \{1,2,3\}$ is a function satisfying $f_{n}(x_1,\ldots,x_n) \colonequals i$ whenever
$ \abs{\{ \ell\in\{1,\ldots,n\} : x_\ell = i \}} > \abs{\{ \ell\in\{1,\ldots,n\} : x_\ell = j \}}$ for all $j \neq i$, $i,j\in\{1,2,3\}$. Note that we do not prescribe any behavior in cases of ties, i.e.~when $\exists$ $i,j\in\{1,2,3\}$ such that $\abs{\{ \ell\in\{1,\ldots,n\} : x_\ell = i \}} =\abs{\{ \ell\in\{1,\ldots,n\} : x_\ell = j \}}$.
\end{definition}
Since the values of $f_n$ for ties does not concern us particularly, we will often write ``plurality''
or ``the plurality function'' to denote an arbitrary plurality function.

\begin{definition}[\embolden{Influences}]\label{infdef}
Given a probability measure $P$ on $ \{1,2,3\}$, the $i$'th
 {\em influence} $I_i(f)\in\R$ of a function $f : \{1,2,3\}^n \to \{1,2,3\}$ is defined by
 \[
 I_i(f) := P[f(x_1,\ldots,x_{i-1},x_i,x_{i+1},\ldots,x_n) \neq f(x_1,\ldots,x_{i-1},y,x_{i+1},\ldots,x_n)],
 \]
 where $x_i,y$ are i.i.d. samples from $P$ for all $i=1,\ldots,n$. We say that the sequence $f_n\colon\{1,2,3\}^n \to \{1,2,3\} $ has {\em vanishing influences} if
 \[
 \lim_{n \to \infty} \max_{1 \leq i \leq n} I_i(f_n) = 0.
 \]
\end{definition}
Note that any sequence of plurality functions has vanishing influences.

\begin{cor}[\embolden{Plurality is not Stablest}] \label{cor:plu}
Let $\alpha,\beta\in\R$.  Consider the space $\{1,2,3\}$ equipped with the probability measure $\tilde{Q}$ where
\[
\tilde{Q}(1) = \frac{1}{3} + \alpha n^{-1/2}, \quad \tilde{Q}(2) = \frac{1}{3} + \beta n^{-1/2}, \quad \tilde{Q}(3) = \frac{1}{3} -(\alpha+\beta) n^{-1/2}.
\]
Let $\tilde{P}$ be the distribution of correlated pairs on $\{1,2,3\}^2$ with marginals $\tilde{Q}$ where the two values agree with probability $\rho>0$ and are otherwise independent: for all $x,y\in\{1,2,3\}$,
\[
\tilde{P}(x,y) = \rho1_{(x=y)} \tilde{Q}(x) + (1-\rho) \tilde{Q}(x) \tilde{Q}(y),
\]
and let $P =  \tilde{P}^n$.
Let $f_n :\{1,2,3\}^n \to \{1,2,3\}$ be the sequence of plurality functions and assume $(\alpha,\beta) \neq (0,0)$.
Then there exist a sequence $g_n\colon\{1,2,3\}^{n}\to\{1,2,3\}$ of functions of vanishing influences with respect to $P$ such that
\[
\lim_{n \to \infty} P[f_n = i] =  \lim_{n \to \infty} P[g_n = i], \quad \forall i \in\{1,2,3\}
\]
and for $x,y\in\{1,2,3\}^{n}$ distributed according to $P$,
\[
\lim_{n \to \infty} P[f_n(x) = f_n(y)] < \lim_{n \to \infty} P[g_n(x) = g_n(y)].
\]
\end{cor}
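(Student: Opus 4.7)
The strategy is to pass from the discrete cube to Gaussian space via the multivariate Central Limit Theorem, invoke Theorem \ref{thm1}, and then pull back the improved Gaussian partition to $\{1,2,3\}^n$. I would proceed in three steps.

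First, I would identify the Gaussian limit of plurality. Let $\phi_{n}\colon\{1,2,3\}^{n}\to H$ send $x$ to its centered, scaled empirical count vector $\sqrt{n}(N_{1}(x)/n-1/3,N_{2}(x)/n-1/3,N_{3}(x)/n-1/3)$, where $H\colonequals\{v\in\R^{3}\colon v_{1}+v_{2}+v_{3}=0\}\cong\R^{2}$. Under $P=\tilde{P}^{n}$, the multivariate CLT shows that the joint law of $(\phi_{n}(x),\phi_{n}(y))$ converges to a Gaussian pair $(U,V)$ on $H\times H$ with common mean given by the projection of $(\alpha,\beta,-\alpha-\beta)$ onto $H$, and with covariance matching the Ornstein-Uhlenbeck model at parameter $\rho$; the crucial input is that under $\tilde{P}$ the cross covariance $\mathrm{Cov}(\mathbf{1}[x_{i}=j],\mathbf{1}[y_{i}=k])$ equals exactly $\rho$ times the marginal covariance $\tilde{Q}(j)\delta_{jk}-\tilde{Q}(j)\tilde{Q}(k)$, which in the limit yields the $\rho$ correlation structure of Definition \ref{OUDef}. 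Since plurality selects the coordinate maximizing the count, $f_{n}(x)=i$ iff $\phi_{n}(x)\in A_{i}\colonequals\{v\in H\colon v_{i}\geq v_{j}\text{ for all }j\}$. After the whitening rescaling on $H$ that normalizes the marginal covariance to the identity, a short calculation with the projected basis vectors $e_{i}-\tfrac{1}{3}(e_{1}+e_{2}+e_{3})$ verifies that the pairwise angles between the cone normals satisfy $\langle y_{i},y_{j}\rangle/(\|y_{i}\|\|y_{j}\|)=-1/2$, so $\{A_{i}\}_{i=1}^{3}$ is a shifted standard simplex partition of $\R^{2}$ in the sense of Definition \ref{flatdef}, with shift nonzero because $(\alpha,\beta)\neq(0,0)$. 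Therefore $\lim_{n}P[f_{n}=i]=a_{i}\colonequals\gamma_{2}(A_{i})$ and $\lim_{n}P[f_{n}(x)=f_{n}(y)]=S_{\rho}(\{A_{i}\}_{i=1}^{3})$.

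Second, I would apply Theorem \ref{thm1} with ambient dimension $n=2$ (so the partition has $3$ parts) and $\rho>0$ to obtain a partition $\{B_{i}\}_{i=1}^{3}$ of $\R^{2}$ with the same volumes $(a_{1},a_{2},a_{3})$ and strictly larger noise stability than $\{A_{i}\}$. To make the pullback well behaved, I would first approximate $\{B_{i}\}$ by a partition $\{B_{i}'\}$ with piecewise smooth boundaries and $\gamma_{2}(\partial B_{i}')=0$, preserving both the target volumes and the strict noise stability improvement. Then define $g_{n}(x)\colonequals i$ whenever $\phi_{n}(x)\in B_{i}'$. By the weak convergence of $(\phi_{n}(x),\phi_{n}(y))$ and the portmanteau theorem applied to the $\gamma_{2}$-continuity sets $B_{i}'$ and $B_{i}'\times B_{i}'$, one obtains $\lim_{n}P[g_{n}=i]=a_{i}$ and $\lim_{n}P[g_{n}(x)=g_{n}(y)]=S_{\rho}(\{B_{i}'\})>S_{\rho}(\{A_{i}\})=\lim_{n}P[f_{n}(x)=f_{n}(y)]$.

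The main obstacle I expect is showing that $g_{n}$ has vanishing influences. Changing a single coordinate of $x$ perturbs $\phi_{n}(x)$ by $O(1/\sqrt{n})$, so $g_{n}$ can change only if $\phi_{n}(x)$ lies within an $O(1/\sqrt{n})$ tubular neighborhood of $\bigcup_{i}\partial B_{i}'$. Because the boundaries were chosen piecewise smooth in $\R^{2}$ and the lattice law of $\phi_{n}$ is controlled by a local CLT whose density tends uniformly on compacta to a smooth Gaussian density on $H$, a standard tube estimate bounds each influence by $O(1/\sqrt{n})$, which tends to zero. A subsidiary point that also needs care is matching the limit measures $\lim_{n}P[g_{n}=i]$ exactly to $a_{i}$ rather than merely approximately; this can be arranged by modifying $\{B_{i}'\}$ on a region of arbitrarily small Gaussian measure to absorb the $o(1)$ discrepancy between $P[g_{n}=i]$ and $a_{i}$ without destroying either the strict stability improvement or the vanishing-influence estimate.
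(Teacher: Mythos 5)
Your proposal is correct and follows essentially the same route as the paper's proof: apply the multivariate CLT to the empirical count vector so that plurality becomes a shifted standard simplex partition in the Gaussian limit, invoke Theorem~\ref{thm1} to obtain a strictly better partition with the same volumes, approximate it by a partition with a tractable boundary, and pull it back to the cube. The one notable implementation difference is in the approximation used to control influences: the paper approximates the improved partition by a finite union of \emph{axis-parallel rectangles}, which makes the $O(n^{-1/2})$ influence bound immediate (changing one coordinate moves each scaled count by at most $n^{-1/2}$, so the output can change only if some count is within $n^{-1/2}$ of a finite set of thresholds), whereas you approximate by a piecewise-smooth-boundary partition and invoke a tube estimate together with a local CLT. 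Your version works but is slightly heavier than needed: the vanishing-influence condition only requires $\max_i I_i(g_n)\to 0$, for which weak convergence plus the portmanteau theorem on a fixed $\delta$-tube suffices, so the local CLT is overkill. Your ``subsidiary point'' about matching the limit measures exactly is also moot once the piecewise-smooth approximation is chosen to preserve the volumes $(a_1,a_2,a_3)$, as you already stipulated.
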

Informally, the corollary says that plurality is not the most stable low-influence function among all functions that have the same probability of getting all outcomes, when the election is slightly biased.  Here we think of $n$ as the number of voters in an election, $\{1,2,3\}$ as possible votes between $3$ candidates, and $f_{n},g_{n}$ as functions which input the votes and output the winner of the election. We further note that for every fixed values of $\alpha$ and $\beta$ a sequence of functions has vanishing influences if and only if it has vanishing influences under the uniform measure.

\subsection{Bilinear Noise Stability}
Given our main results, it is natural to ask {\em what is} optimal partition for noise stability when partitioning into three or more parts of unequal measures.
While we do not have a conjecture for what the partition is, there is a very special case where we consider two different partitions of special sizes where the optimal partitions can be found explicitly using Borell's results.


\begin{prop}\label{q2}
Let $\rho\in(0,1)$.  Then there exist partitions $\{A_{1},A_{2},A_{3}\}\subset\R^{2}$ and $\{B_{1},B_{2},B_{3}\}\subset\R^{2}$ with volumes $a=(1/3,1/3,1/3)$ and $b=(1/2,0,1/2)$ respectively, so that
\begin{equation}\label{zero5}
\sum_{i=1}^{3}\int_{\R^{2}}1_{A_{i}(a)}T_{\rho}1_{B_{i}(b)}d\gamma_{2}{=}
\sup_{\substack{\{C_{i}\}_{i=1}^{3}\,
\mathrm{is}\,\mathrm{a}\,\mathrm{partition}\,\mathrm{of}\,\R^{2}\,\mathrm{with}\,\mathrm{volumes}\,a\\
\{D_{i}\}_{i=1}^{3}\,
\mathrm{is}\,\mathrm{a}\,\mathrm{partition}\,\mathrm{of}\,\R^{2}\,\mathrm{with}\,\mathrm{volumes}\,b}}
\sum_{i=1}^{3}\int_{\R^{2}}1_{C_{i}}T_{\rho}1_{D_{i}}d\gamma_{2}.
\end{equation}
Moreover the maximizing partitions (must) satisfy that $A_1$ and $B_1$ (and also $A_3$ and $B_3$) are parallel half-spaces of measures $1/3$ and $1/2$.
\end{prop}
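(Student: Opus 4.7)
The plan is to reduce this bilinear optimization to two essentially independent applications of Borell's Gaussian noise stability inequality. The starting observation is that $b_2=0$, so $\gamma_2(D_2)=0$ for any partition $\{D_i\}_{i=1}^{3}$ with volumes $b=(1/2,0,1/2)$; hence the $i=2$ summand in $\sum_{i=1}^{3}\int 1_{C_i}T_\rho 1_{D_i}\,d\gamma_2$ automatically vanishes, and the supremum in \eqref{zero5} is equal to
\[
\sup\Bigl\{\int_{\R^2}1_{C_1}T_\rho 1_{D_1}\,d\gamma_2+\int_{\R^2}1_{C_3}T_\rho 1_{D_3}\,d\gamma_2\Bigr\},
\]
where $C_1,C_3$ are disjoint sets of Gaussian measure $1/3$ each (the third piece $C_2=\R^2\setminus(C_1\cup C_3)$ then automatically has measure $1/3$), and $D_3=\R^2\setminus D_1$ is complementary to $D_1$ of measure $1/2$.

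Next, I would apply the bilinear form of Borell's inequality \cite{borell85} to each summand separately: for any measurable $C,D\subset\R^2$,
\[
\int_{\R^2}1_C T_\rho 1_D\,d\gamma_2 \;\le\; \int_{\R^2}1_{C^*}T_\rho 1_{D^*}\,d\gamma_2,
\]
where $C^*,D^*$ are parallel half-spaces of the same Gaussian measures as $C,D$; moreover, for $\rho\in(0,1)$ the equality case (see \cite{mossel12,eldan13}) forces $C$ and $D$ to coincide with parallel half-spaces up to Gaussian null sets. Then I would exhibit an explicit candidate: setting $c_1\colonequals\Phi^{-1}(2/3)>0$, define
\[
A_1=\{x\in\R^2:x_1>c_1\},\quad A_2=\{x:|x_1|<c_1\},\quad A_3=\{x:x_1<-c_1\},
\]
\[
B_1=\{x:x_1>0\},\quad B_2=\emptyset,\quad B_3=\{x:x_1<0\}.
\]
These partitions have the required volumes $a$ and $b$, and the pairs $(A_1,B_1)$ and $(A_3,B_3)$ are parallel half-spaces (with inward normals $+e_1$ and $-e_1$ respectively). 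Since this choice attains Borell's upper bound in each of the two summands simultaneously, the sum attains the supremum and equality in \eqref{zero5} holds.

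The ``moreover'' claim follows directly from the equality case of Borell: any maximizing partition must force both Borell bounds above to be tight, which in turn forces $A_1,B_1$ to be parallel half-spaces of volumes $1/3$ and $1/2$, and similarly $A_3,B_3$. The main point to verify is that the two Borell subproblems genuinely decouple, so that simultaneous equality is achievable without obstruction; this is immediate here because any direction $v$ chosen for the pair $(A_1,B_1)$ automatically makes $D_3=\R^2\setminus D_1$ the parallel half-space with normal $-v$, and the disjointness of $A_1,A_3$ is automatic since each has measure $1/3<1/2$ and they sit in opposite half-spaces about the same hyperplane. Thus no step in the plan is a serious obstacle; the only nontrivial input beyond Borell's inequality itself is the equality characterization for $\rho\in(0,1)$, which is available in \cite{mossel12,eldan13}.
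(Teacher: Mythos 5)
Your proposal is correct and follows essentially the same route as the paper: since $b_2=0$ the middle term vanishes, each of the two remaining bilinear terms is maximized termwise by Borell's inequality with parallel half-spaces, your explicit strip/half-space construction attains both bounds simultaneously, and the uniqueness case of Borell \cite{mossel12,eldan13} gives the ``moreover'' claim. Your write-up just supplies a bit more detail (the explicit $c_1=\Phi^{-1}(2/3)$ candidate and the decoupling check) than the paper's brief argument.
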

Note in particular that the partition $\{A_{1},A_{2},A_{3}\}$ is not a standard simplex partition. The proposition in this case follows from Borell's results as each of the terms
$\int_{\R^{2}} f_1 T g_1 d\gamma_{2}$ and $\int_{\R^{2}} f_3 T g_3 d\gamma_{2}$ is individually maximized by $A_1,B_1$ and $A_3, B_3$ respectively. Moreover, given the uniqueness version of Borell's result~\cite{mossel12,eldan13} these are the only partitions that achieve the maximum.
We note again that in the case of partitions into two parts, even in the bilinear case of partitions $\{A_1,A_2\}$ and $\{B_1,B_2\}$ the optimal partitions are always half-space (standard simplex) partitions.

We further note that even if the measures are slightly perturbed for example by letting
$\tilde{b} = (1/2-\eps, 2 \eps,1/2-\eps)$ for a small $\eps>0$ then any (almost) optimal partitions $\tilde{A}, \tilde{B}$ for $a,\tilde{b}$ in \eqref{zero5} have to be close to those given by $A$ and $B$ in the sense that they can be matched so that
$\gamma_n(\tilde{A}_i \Delta A_i)$ and $\gamma_n(\tilde{B}_i \Delta B_i)$ are small for all $1 \leq i \leq 3$.
This follows since if $A,B$ are almost optimal for $a,\tilde{b}$ then they are almost optimal for $a,b$.
The robust versions of Borell's result given in~\cite{mossel12,eldan13} then implies that $A,B$ have to be close to the optimal partition.

\subsection{Organization}

In \S\ref{secvar}, we compute the first variation of the noise stability of flat partitions.  This computation shows that, if a partition $\{A_{i}\}_{i=1}^{n+1}$ optimizes noise stability, then $T_{\rho}(1_{A_{i}}-1_{A_{j}})$ must be constant on $A_{i}\cap A_{j}$ for all $i,j\in\{1,\ldots,n+1\}$, $i\neq j$.  We will use this statement in the contrapositive form, since in \S\ref{secanalytic}, we show that for many flat partitions $\{A_{i}\}_{i=1}^{n+1}$, there exists $i,j\in\{1,\ldots,n+1\}$ with $i\neq j$ such that $T_{\rho}(1_{A_{i}}-1_{A_{j}})$ is not constant on $A_{i}\cap A_{j}$.  And therefore, many flat partitions $\{A_{i}\}_{i=1}^{n+1}$ do not optimize noise stability.  The Lemmas of \S\ref{secvar} and \S\ref{secanalytic} are combined in \S\ref{secmain}, where Theorem~\ref{thm1} is proved.  Finally, Corollary~\ref{cor:plu} is shown to be a consequence of Theorem~\ref{thm1} in \S\ref{secvoting}.

\section{The First Variation}\label{secvar}


We compute the first variation of the noise stability, in order to understand the sets that achieve equality in \eqref{zero0} and \eqref{zero5}.  The following argument is a normal variation implementation of arguments from \cite[Lemma 3.3]{khot09}, \cite[Lemma 2.7]{heilman12}.  Basically, we move some mass between two sets to show that the noise stability can be improved.  The movement of mass is defined by a vector field $V$.

\begin{lemma}[\embolden{Normal Variation}]\label{lemma1.1}
Let $\rho\in(-1,0)\cup(0,1)$.  Let $\{A_{i}\}_{i=1}^{n+1}$ be a flat partition of $\R^{n}$ with volumes $(a_{1},\ldots,a_{n+1})$.  Assume that there exist $i,j\in\{1,\ldots,n+1\}$ with $i\neq j$ such that $T_{\rho}(1_{A_{i}}-1_{A_{j}})$ is not constant on $A_{i}\cap A_{j}$.  Then
\begin{equation}\label{one1}
\begin{aligned}
\rho\in(0,1)&\quad\Longrightarrow\quad
S_{\rho}(\{A_{i}\}_{i=1}^{n+1})
<\sup_{\partsubpin}S_{\rho}(\{B_{i}\}_{i=1}^{n+1}).\\
\rho\in(-1,0)&\quad\Longrightarrow\quad
S_{\rho}(\{A_{i}\}_{i=1}^{n+1})
>\inf_{\partsubpin}S_{\rho}(\{B_{i}\}_{i=1}^{n+1}).
\end{aligned}
\end{equation}
\end{lemma}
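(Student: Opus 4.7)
The plan is to run a standard normal variation of the common boundary $\Sigma := \overline{A_i}\cap\overline{A_j}$ of the two offending sets, leaving every other $A_k$ fixed, and to use the hypothesis that $T_\rho(1_{A_i}-1_{A_j})$ is not constant on $\Sigma$ to produce a volume-preserving perturbation with strictly nonzero, sign-choosable, first-order effect on $S_\rho$. Because $\{A_i\}_{i=1}^{n+1}$ is flat, $\Sigma$ lies in an affine hyperplane with a well-defined unit normal $\nu$ pointing from $A_i$ into $A_j$. For any smooth $f\colon\Sigma\to\R$ compactly supported in the relative interior of $\Sigma$ (i.e.\ away from $\overline{A_k}$ for $k\notin\{i,j\}$), I form a one-parameter family $\{A_k^{(t)}\}_{k=1}^{n+1}$ by flowing $\Sigma$ in the direction $\nu$ at speed $f$: the sliver swept out is transferred from $A_j$ to $A_i$ where $f>0$ and the reverse where $f<0$, while $A_k^{(t)}=A_k$ for $k\notin\{i,j\}$. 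For $|t|$ small this is a bona fide partition depending smoothly on $t$.

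Next I compute first variations via the Hadamard formula. Writing $d\sigma$ for the induced Gaussian surface measure on $\Sigma$,
\[
\tfrac{d}{dt}\big|_{t=0}\gamma_n(A_i^{(t)}) = \int_\Sigma f\, d\sigma = -\tfrac{d}{dt}\big|_{t=0}\gamma_n(A_j^{(t)}),
\]
so restricting to $f$ with $\int_\Sigma f\, d\sigma = 0$ preserves every volume to first order. Using self-adjointness of $T_\rho$ in $L^2(\gamma_n)$ and the Hadamard formula again,
\begin{align*}
\tfrac{d}{dt}\big|_{t=0} S_\rho(\{A_k^{(t)}\})
&= 2\,\tfrac{d}{dt}\big|_{t=0}\Big[\int 1_{A_i^{(t)}} T_\rho 1_{A_i^{(t)}}\,d\gamma_n + \int 1_{A_j^{(t)}} T_\rho 1_{A_j^{(t)}}\,d\gamma_n\Big] \\
&= 2\int_\Sigma \bigl(T_\rho 1_{A_i}-T_\rho 1_{A_j}\bigr)\, f\, d\sigma.
\end{align*}
Let $h := T_\rho(1_{A_i}-1_{A_j})|_\Sigma$. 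By hypothesis $h$ is non-constant on $\Sigma$, so $h-\bar h$ is a nonzero element of $L^2(\sigma)$, where $\bar h$ is the $\sigma$-weighted mean. Picking a smooth, compactly supported $f$ with $\int_\Sigma f\, d\sigma = 0$ close enough (in $L^2(\sigma)$) to $\pm(h-\bar h)$, I can force $\int_\Sigma h f\, d\sigma$ to have either sign; hence the first-order derivative above is strictly positive for one sign of $f$ and strictly negative for the other.

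To obtain a partition with \emph{exactly} the prescribed volumes (not merely to first order), I will apply an implicit-function-theorem correction: extend to a two-parameter family $\{A_k^{(t_1,t_2)}\}$ built from $t_1 f$ plus an auxiliary profile $t_2 g$ with $\int_\Sigma g\, d\sigma\neq 0$; the single constraint $\gamma_n(A_i^{(t_1,t_2)})=a_i$ (equivalently $\gamma_n(A_j^{(t_1,t_2)})=a_j$, since mass only moves between these two sets) can be solved as $t_2=s(t_1)$ with $s(0)=s'(0)=0$, giving a bona fide volume-preserving smooth curve along which $\tfrac{d}{dt_1}\big|_{t_1=0} S_\rho = 2\int_\Sigma h f\, d\sigma\neq 0$. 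For $\rho\in(0,1)$ I choose the sign of $f$ that makes this derivative positive, obtaining the first inequality in \eqref{one1}; for $\rho\in(-1,0)$ I flip the sign to obtain the second. The main technical obstacle is carrying out Step~1 cleanly: $f$ must be supported in the relative interior of $\Sigma$, away from the $(n-2)$-dimensional strata where three or more $A_k$'s meet, so that the normal flow produces admissible partitions for all sufficiently small $|t|$.
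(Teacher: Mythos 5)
Your proposal is correct and follows essentially the same route as the paper: a normal variation of the shared facet with a mean-zero normal speed, the first-variation identity $\frac{d}{dt}\big|_{t=0}S_\rho=2\int_\Sigma\bigl(T_\rho 1_{A_i}-T_\rho 1_{A_j}\bigr)f\,d\sigma$, and an implicit-function-theorem correction to restore the exact volume constraint (the paper realizes the sign-choosable perturbation via two disjoint bumps $\phi_1-\phi_2$ supported where $T_\rho(1_{A_i}-1_{A_j})$ takes separated values, rather than $L^2$-approximating $h-\bar h$, and cites Barbosa--do Carmo for the IFT step). The only blemish is the harmless slip in your display where the factor $2$ is pulled outside before differentiating; it should emerge from the product rule (self-adjointness) applied to each quadratic term, as your final expression correctly reflects.
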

\begin{proof}
Let $A\subset\R^{n}$ be a set with smooth boundary, and let $N\colon\partial A\to S^{n-1}$ denote the unit exterior normal to $\partial A$.  Let $V\colon\R^{n}\to \R^{n}$ be specified later.  Let $\Psi\colon\R^{n}\times(-1,1)$ such that $(d/dt)|_{t=0}\Psi(x,t)=V(x)$ for all $x\in\R^{n}$, and such that $\Psi(x,0)=x$ for all $x\in\R^{n}$.
For $x,y\in\R^{n}$, define
$$G(x,y)\colonequals e^{-\vnorm{\rho x-y}_{2}^{2}/[2(1-\rho^{2})]}.$$
Define $A^{(t)}\colonequals\Psi(A,t)$.  For $f\colon\R^{n}\to\R$, using Definition \ref{OUDef}, write
\begin{equation}\label{one5.0}
T_{\rho}f(x)\equalscolon\int f(y)G(x,y)dy.
\end{equation}

Let $J\Psi(y,t)$ denote $\abs{\det(\partial\Psi_{i}(y,t)/\partial y_{j})_{1\leq i,j\leq n}}\in\R$.  We claim that
\begin{equation}\label{one5}
\frac{d}{dt}|_{t=0}T_{\rho}1_{A^{(t)}}(x)=\int_{\partial A} G(x,y)\langle V(y),N(y)\rangle dy.
\end{equation}
Indeed, using that $(d/dt)|_{t=0}\Psi(x,t)=V(x)$ and $\Psi(x,0)=x$, $J\Psi(y,0)=1$ for all $y$, so the chain rule and divergence theorem show that
\begin{equation}\label{one6}
\begin{aligned}
\frac{d}{dt}|_{t=0}\int 1_{A^{(t)}}(y)G(x,y)dy
&=\frac{d}{dt}|_{t=0}\int_{A^{(t)}}G(x,y)dy
=\frac{d}{dt}|_{t=0}\int_{A}G(x,\Psi(y,t))J\Psi(y,t)dy\\
&=\int_{A}\mathrm{div}_{y}(G(x,y)V(y))dy
=\int_{\partial A}G(x,y)\langle V(y),N(y)\rangle dy.
\end{aligned}
\end{equation}
So, using $G(x,y)\gamma_{n}(x)=G(y,x)\gamma_{n}(y)$,
\begin{equation}\label{one7}
\begin{aligned}
&\frac{d}{dt}|_{t=0}\int 1_{A^{(t)}}T_{\rho}1_{A^{(t)}}d\gamma_{n}
=2\int 1_{A}(\frac{d}{dt}|_{t=0}T_{\rho}1_{A^{(t)}})d\gamma_{n}\\
&\quad\stackrel{\eqref{one5}}{=}2\int_{A}\int_{\partial A}G(x,y)\langle V(y),N(y)\rangle dy d\gamma_{n}(x)\\
&\quad=2\int_{\partial A}\int_{A}G(x,y)d\gamma_{n}(x)\langle V(y),N(y)\rangle dy
=2\int_{\partial A}\int_{A}G(y,x)dx\langle V(y),N(y)\rangle \gamma_{n}(y)dy\\
&\quad\stackrel{\eqref{one5.0}}{=}2\int_{\partial A}T_{\rho}1_{A}(y)\langle V(y),N(y)\rangle \gamma_{n}(y)dy.
\end{aligned}
\end{equation}

Suppose $V\colon\R^{n}\to\R^{n}$ satisfies
\begin{equation}\label{one7.1}
\int_{\partial A}\langle V(y),N(y)\rangle \gamma_{n}(y)dy=0.
\end{equation}
 We then claim that $\Psi$ is volume-preserving at $t=0$.  Indeed, using \eqref{one5} for $\rho=0$,
 $$\frac{d}{dt}|_{t=0}\gamma_{n}(A^{(t)})=\int_{\partial A}\langle V(y),N(y)\rangle d\gamma_{n}(y).$$
 %


Let $N$ denote the exterior normal to $A_{i}$, so that $-N$ is the exterior normal to $A_{j}$.  For a subset of $(\partial A_{i})\cap(\partial A_{j})$, we denote $\gamma_{n-1}$ as the measure $\gamma_{n}$ restricted to $(\partial A_{i})\cap(\partial A_{j})$.  Since $T_{\rho}(1_{A_{i}}-1_{A_{j}})$ is continuous and not constant on $A_{i}\cap A_{j}$, there exist disjoint relatively open sets $U_{1},U_{2}\subset(\partial A_{i})\cap(\partial A_{j})$ such that $\gamma_{n-1}(U_{1})=\gamma_{n-1}(U_{2})>0$, and such that
\begin{equation}\label{one7.2}
T_{\rho}(1_{A_{i}}-1_{A_{j}})(u_{1})>T_{\rho}(1_{A_{i}}-1_{A_{j}})(u_{2})\quad\forall\, u_{1}\in U_{1},\, u_{2}\in U_{2}.
\end{equation}
Let $A\colonequals A_{i}$.  Let $\phi_{1},\phi_{2}\colon\R^{n}\to[0,\infty)$ be $C^{\infty}$ functions such that $(\partial A_{i})\cap(\partial A_{j})\cap\mathrm{support}(\phi_{1})$ is disjoint from $(\partial A_{i})\cap(\partial A_{j})\cap\mathrm{support}(\phi_{2})$, and such that
\begin{equation}\label{one7.3}
\int_{U_{1}}\phi_{1}(y)\gamma_{n}(y)dy=\int_{U_{2}}\phi_{2}(y)\gamma_{n}(y)dy.
\end{equation}
Define $V\colon\R^{n}\to\R^{n}$ by $V(y)\colonequals (\phi_{1}(y)-\phi_{2}(y))N$.  Note that $V$ is $C^{\infty}$, and
\begin{flalign*}
\int_{\partial A}\langle V(y),N(y)\rangle \gamma_{n}(y)dy
=\int_{U_{1}}\phi_{1}\gamma_{n}(y)dy-\int_{U_{2}}\phi_{2}\gamma_{n}(y)dy
\stackrel{\eqref{one7.3}}{=}0.
\end{flalign*}


So, $V$ satisfies \eqref{one7.1}.  We can therefore compute
\begin{flalign}
&\frac{d}{dt}|_{t=0}S_{\rho}(\{A_{p}^{(t)}\}_{p=1}^{n+1})
=\frac{d}{dt}|_{t=0}\int 1_{A_{i}^{(t)}}T_{\rho}1_{A_{i}^{(t)}}d\gamma_{n}+\frac{d}{dt}|_{t=0}\int 1_{A_{j}^{(t)}}T_{\rho}1_{A_{j}^{(t)}}d\gamma_{n}\nonumber\\
&\,\,\stackrel{\eqref{one7}}{=}2\int_{(\partial A_{i})\cap(\partial A_{j})}T_{\rho}1_{A_{i}}(y)\langle V(y),N(y)\rangle \gamma_{n}(y)dy
-2\int_{(\partial A_{i})\cap(\partial A_{j})}T_{\rho}1_{A_{j}}(y)\langle V(y),N(y)\rangle \gamma_{n}(y)dy\nonumber\\
&\,\,=2\int_{(\partial A_{i})\cap(\partial A_{j})}(T_{\rho}1_{A_{i}}(y)-T_{\rho}1_{A_{j}}(y))\langle V(y),N(y)\rangle \gamma_{n}(y)dy\nonumber\\
&\,\,=2\int_{(\partial A_{i})\cap(\partial A_{j})}(T_{\rho}1_{A_{i}}(y)-T_{\rho}1_{A_{j}}(y))(\phi_{1}(y)-\phi_{2}(y))\gamma_{n}(y)dy
\stackrel{\eqref{one7.2}\wedge\eqref{one7.3}}{>}0.\label{one7.7}
\end{flalign}

Finally, applying the Implicit Function Theorem as in \cite[Lemma 2.4]{barbosa84}, there exists $\epsilon>0$ such that, for every $t\in(-\epsilon,\epsilon)$, there exists a partition $\{\widetilde{A}_{p}^{(t)}\}_{p=1}^{n+1}$ of $\R^{n}$ with volumes $(a_{1},\ldots,a_{n+1})$ such that
\begin{equation}\label{one7.8}
\frac{d}{dt}|_{t=0}\sum_{p=1}^{n+1}\int_{\R^{n}}1_{A_{p}^{(t)}}T_{\rho}1_{A_{p}^{(t)}}d\gamma_{n}
=\frac{d}{dt}|_{t=0}\sum_{p=1}^{n+1}\int_{\R^{n}}1_{\widetilde{A}_{p}^{(t)}}T_{\rho}1_{\widetilde{A}_{p}^{(t)}}d\gamma_{n}.
\end{equation}
Then \eqref{one7.7} and \eqref{one7.8} imply \eqref{one1}.
\end{proof}

\section{Analyticity of the Ornstein-Uhlenbeck Operator}\label{secanalytic}


Let $A$ be a polyhedral cone, and let $\rho\in(-1,0)\cup(0,1)$.  In this section we show, roughly speaking, that if we restrict $T_{\rho}1_{A}$ to a line parallel to a facet of $A$, then we get a holomorphic function.  Let $\C=\{x_{1}+x_{2}\sqrt{-1}\colon x_{1},x_{2}\in\R\}$ denote the complex numbers.

The following statements establish some notation and assumptions on a partition $\{A_{p}\}_{p=1}^{n+1}$.  Rather than repeating these statements multiple times below, we will instead repeatedly reference them as Assumption \ref{as1}.
\begin{assumption}(\embolden{Geometric Assumptions})\label{as1}
Let $\{A_{p}\}_{p=1}^{n+1}$ be a flat partition of $\R^{n}$ with volumes $a\colonequals(a_{1},\ldots,a_{n+1})$.  Fix $i,j\in\{1,\ldots,n+1\}$, $i\neq j$ and let $\Pi\subset\R^{n}$ be a hyperplane so that $A_{i}\cap A_{j}\subset\Pi$.  Assume that $A_{i}$ and $A_{j}$ share a common facet.  Let $c\in\R$ and let $N\in\R^{n}$ with $\vnorm{N}_{2}=1$ so that
\begin{equation}\label{three1}
\Pi=\{x\in\R^{n}\colon\langle x,N\rangle=c\}.
\end{equation}
Without loss of generality, assume that
\begin{equation}\label{three2}
A_{i}\subset\{x\in\R^{n}\colon\langle x,N\rangle\leq c\}.
\end{equation}
Let $\mathrm{int}(A_{i}\cap A_{j})$ denote the relative interior of $A_{i}\cap A_{j}$.  Note that $A_{i}\cap A_{j}$ is a nonempty $(n-1)$-dimensional set.  Let $L\subset\Pi$ be any infinite line so that $L\cap\,\mathrm{int}(A_{i}\cap A_{j})\neq\emptyset$.  Let $w\in\R^{n}$ so that $L=\{cN+tw\colon t\in\R\}$.  Without loss of generality, assume $A_{i}\cap A_{j}\supset\{cN+tw\colon t\geq1\}$.
\end{assumption}

The following is the key lemma used to prove Theorem \ref{thm1}.

\begin{lemma}(\embolden{Analyticity Restricted to Lines})\label{lemma4}
Let $\rho\in(-1,0)\cup(0,1)$.  Suppose Assumption \ref{as1} holds.  Then the function $T_{\rho}(1_{A_{i}}-1_{A_{j}})$ satisfies the following two properties.
\begin{itemize}
\item[(a)] $\lim_{t\to\infty}T_{\rho}(1_{A_{i}}-1_{A_{j}})(cN+tw)=\lim_{t\to-\infty}T_{\rho}(1_{A_{i}}-1_{A_{j}})(cN+tw)$ if and only if $c=0$.
\item[(b)] The function $t\mapsto T_{\rho}(1_{A_{i}}-1_{A_{j}})(cN+tw)$, initially defined for $t\in\R$, can be extended to a holomorphic function of $t\in\C$.
\end{itemize}
\end{lemma}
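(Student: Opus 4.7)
My plan is to start from the integral representation
\[
T_\rho 1_A(x) = \frac{1}{(2\pi\sigma^2)^{n/2}} \int_A e^{-\|z - \rho x\|^2/(2\sigma^2)}\, dz, \qquad \sigma^2 \colonequals 1-\rho^2,
\]
obtained from Definition~\ref{OUDef} by the change of variables $z = \rho x + y\sqrt{1-\rho^2}$. For part (b), I substitute $x = cN + tw$ and allow $t \in \C$, interpreting $\|v\|^2$ as the bilinear form $v \cdot v$. For each fixed $z$ the integrand is entire in $t$ (the exponent is a polynomial in $t$ of degree two). An AM--GM estimate on the cross term in the expansion of $\|z - \rho cN - \rho tw\|^2$ shows that for $t$ in any compact subset of $\C$ the modulus of the integrand is dominated by $e^{-\|z\|^2/(4\sigma^2)}$ times a constant, so the integral converges absolutely and uniformly on compact subsets of $\C$. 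Morera plus Fubini (the triangle contour integral vanishes pointwise in $z$ by Cauchy) then shows that $T_\rho 1_{A_i}(cN + tw)$ and $T_\rho 1_{A_j}(cN + tw)$ each extend to entire functions of $t$, and so does their difference, proving (b).

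For part (a), I compute each one-sided limit from the probabilistic form $T_\rho 1_{A_i}(x) = \mathrm{Pr}[\rho x + \sigma Z \in A_i]$ with $Z \sim N(0, I_n)$. The set $A_i$ is the intersection of the half-spaces $\{\langle \cdot - y, y_i - y_k\rangle \geq 0\}$ for $k \neq i$. For $x = cN + tw$, using $\langle w, N\rangle = 0$ (since $L \subset \Pi$) and $\langle y, N\rangle = c$ (since the apex $y$ lies on $\Pi$), the $k = j$ inequality reduces to the $t$-independent condition $\langle Z, N\rangle \leq c\sqrt{(1-\rho)/(1+\rho)}$, with probability $\Phi(c\sqrt{(1-\rho)/(1+\rho)})$. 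For $k \neq i, j$, the inequality takes the form $\sigma \langle Z, y_i - y_k\rangle \geq -\rho t \langle w, y_i - y_k\rangle + O(1)$. Since the ray $\{cN + tw : t \geq 1\}$ sits in the interior of the $(n-1)$-dimensional pointed cone $A_i \cap A_j$, one has $\langle w, y_i - y_k\rangle > 0$ for every $k \neq i, j$. For $\rho > 0$, each such condition is then satisfied with probability $\to 1$ as $t \to +\infty$ and violated with probability $\to 1$ as $t \to -\infty$, with Gaussian-tail error bounds that decay exponentially in $|t|$.

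Combining via a union bound gives
\[
\lim_{t \to +\infty} T_\rho 1_{A_i}(cN + tw) = \Phi\bigl(c\sqrt{(1-\rho)/(1+\rho)}\bigr), \qquad \lim_{t \to -\infty} T_\rho 1_{A_i}(cN + tw) = 0,
\]
and the analogue for $A_j$ follows by swapping $i \leftrightarrow j$ (which flips the sign of $N$ in the $k = i$ inequality). Subtracting,
\[
\lim_{t \to +\infty} T_\rho(1_{A_i} - 1_{A_j})(cN + tw) = 2\Phi\bigl(c\sqrt{(1-\rho)/(1+\rho)}\bigr) - 1, \qquad \lim_{t \to -\infty} T_\rho(1_{A_i} - 1_{A_j})(cN + tw) = 0.
\]
The case $\rho < 0$ is symmetric with the two one-sided limits interchanged. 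Since $\Phi$ is strictly increasing, equality of the two one-sided limits is equivalent to $c = 0$, proving (a).

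The main obstacle will be part (a): promoting the separate ``condition $k$ satisfied with high probability'' statements to the joint event that all $n$ conditions hold simultaneously. The union bound does this cleanly provided the relevant Gaussian tails decay quickly in $|t|$, which in turn relies on the geometric fact that $A_i \cap A_j$ is a pointed $(n-1)$-dimensional cone --- equivalently, that $\{y_i - y_k\}_{k \neq i}$ spans $\R^n$. This holds generically for flat partitions into $n+1$ parts and in particular for every standard simplex partition, which is the regime relevant for Theorem~\ref{thm1}.
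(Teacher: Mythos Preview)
Your argument is correct. Part (a) parallels the paper's proof closely: the paper also computes the $t\to+\infty$ limit by observing that deep inside the facet the sets $A_i,A_j$ look like complementary half-spaces, obtaining $2\,\mathrm{sign}(c)\,\gamma_1\bigl([0,|c|\sqrt{(1-\rho)/(1+\rho)})\bigr)$, which matches your $2\Phi\bigl(c\sqrt{(1-\rho)/(1+\rho)}\bigr)-1$; and it shows the $t\to-\infty$ limit vanishes by the same geometric observation you use, namely that $-w$ points away from $A_i\cup A_j$ so a ball of radius $\delta|t|$ around $cN+tw$ eventually misses both sets. Your pointedness caveat is well placed: the paper asserts $\langle w,y_i-y_p\rangle>0$ for $p\neq i,j$ directly from $cN+w\in\mathrm{int}(A_i\cap A_j)$, which is the same hypothesis you isolate, and as you note it certainly holds for standard simplex partitions, which suffices for Theorem~\ref{thm1}.

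Part (b), however, is genuinely different and considerably simpler than the paper's route. The paper proceeds by induction on $n$: in the base case $n=2$ it writes $T_\rho 1_{A_i}$ as an iterated integral, analytically continues the inner integral via a contour-integral estimate yielding $|\phi(s,z)|\le 1+c'|x_2|e^{c'x_2^2/2}$, and then passes the holomorphicity through the outer integral by truncating and controlling tails; the inductive step slices the $n$-dimensional cone into translates of an $(n-1)$-dimensional cone and repeats. Your approach bypasses all of this structure: the single domination $|e^{-(z-\rho cN-\rho tw)\cdot(z-\rho cN-\rho tw)/(2\sigma^2)}|\le C_K\,e^{-\|z\|^2/(4\sigma^2)}$ for $t$ in a compact $K\subset\C$ is both correct and enough for Morera--Fubini, and it uses nothing about $A_i$ beyond measurability. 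What you gain is brevity and generality (your argument shows $t\mapsto T_\rho 1_A(x_0+tw)$ is entire for \emph{any} measurable $A$); what the paper's approach gains is an explicit growth bound in $|\mathrm{Im}\,t|$, which is not actually needed downstream.
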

\begin{proof}[Proof of (a)]  We first assume that $\rho\in(0,1)$.  By Assumption \ref{as1}, $A_{i}\cap A_{j}\supset\{cN+tw\colon t>1\}$.  Also, $L\cap\mathrm{int}(A_{i}\cap A_{j})\neq0$, and $A_{i},A_{j}$ are cones, so
\begin{flalign*}
&\lim_{t\to\infty}T_{\rho}(1_{A_{i}}-1_{A_{j}})(cN+tw)
=T_{\rho}(1_{\{z\in\R^{n}\colon\langle z,N\rangle\leq c\}}-1_{\{z\in\R^{n}\colon\langle z,N\rangle\geq c\}})(c)\\
&\qquad=\int 1_{(-\infty,c)}(c\rho+s\sqrt{1-\rho^{2}}\,)d\gamma_{1}(s)-\int 1_{(c,\infty)}(c\rho+s\sqrt{1-\rho^{2}}\,)d\gamma_{1}(s)\\
&\qquad=\int_{-\infty}^{(c-c\rho)/\sqrt{1-\rho^{2}}}d\gamma_{1}(s)-\int_{(c-c\rho)/\sqrt{1-\rho^{2}}}^{\infty}d\gamma_{1}(s)
=2\,\mathrm{sign}(c)\cdot\gamma_{1}([0,\abs{c}\frac{1-\rho}{\sqrt{1-\rho^{2}}})).
\end{flalign*}
So, $\lim_{t\to\infty}T_{\rho}(1_{A_{i}}-1_{A_{j}})(cN+tw)=0$ if and only if $c=0$.
In order to prove (a), it remains to show that $\lim_{t\to-\infty}T_{\rho}(cN+tw)=0$.  Since $cN+w\in\mathrm{int}(A_{i}\cap A_{j})$ by Assumption \ref{as1} and Definition \ref{flatdef}, we know that $\langle y_{i},w\rangle>\langle y_{p},w\rangle$ and $\langle y_{j},w\rangle>\langle y_{p},w\rangle$ for all $p\in\{1,\ldots,n+1\}\setminus\{i,j\}$.  So, $\langle y_{i},-w\rangle<\langle y_{p},-w\rangle$ and $\langle y_{j},-w\rangle<\langle y_{p},-w\rangle$ for all $p\in\{1,\ldots,n+1\}\setminus\{i,j\}$.  Therefore, there exists $\delta>0$ and $T<0$ such that, for all $t<T$, $B(cN + tw, \delta\abs{t})$ does not intersect either $A_i$ or $A_j$. Hence, there is some $\delta' > 0$ such that
\begin{equation}\label{zert}
\absf{\limsup_{t\to-\infty}T_{\rho}(1_{A_{i}}-1_{A_{j}})(cN+tw)}\leq\limsup_{t\to-\infty}\int_{B(\rho(c N+\abs{t}w),\delta' t(1-\rho^{2})^{-1/2})^{c}}d\gamma_{n}=0.
\end{equation}

The proof is therefore completed for $\rho\in(0,1)$.  For $\rho\in(-1,0)$, we similarly conclude that $\lim_{t\to-\infty}T_{\rho}(1_{A_{i}}-1_{A_{j}})(cN+tw)=0$ if and only if $c=0$, and $\lim_{t\to\infty}T_{\rho}(cN+tw)=0$.


\end{proof}
\begin{proof}[Proof of (b)]  We prove this by induction on $n$.  We first consider the case $n=2$.  In this case, $A_{p}$ is a two-dimensional sector for $p=1,2,3$.  Fix $i\in\{1,2,3\}$.  We will show that $T_{\rho}1_{A_{i}}(cn+tw)$ is holomorphic in $t$.  Without loss of generality, assume that one edge of $A_{i}$ is parallel to the $x_{2}$ axis.  Further, without loss of generality, suppose there exist $\alpha,\beta,\gamma\in\R$ such that $A_{i}=\{(x_{1},x_{2})\in\R^{2}\colon x_{1}\geq \alpha,x_{2}\geq \beta x_{1}+\gamma\}$.  Then
$$
T_{\rho}1_{A_{i}}(x)=\int_{(A_{i}-\rho x)/\sqrt{1-\rho^{2}}}d\gamma_{2}
=\int_{(\alpha-\rho x_{1})/\sqrt{1-\rho^{2}}}^{\infty}\int_{(\beta s+\gamma-\rho x_{2})/\sqrt{1-\rho^{2}}}^{\infty}e^{-r^{2}/2}e^{-s^{2}/2}drds/2\pi.
$$
Note that the set $\{cN+tw\colon t\geq1\}\subset\R^{2}$ has constant first coordinate, and the second coordinate of $\{cN+tw\colon t\geq1\}$ is an affine function of $t$.  So, defining a constant $\alpha'\colonequals(\alpha-\rho(cN+w)_{1})/\sqrt{1-\rho^{2}}\in\R$, we have
$$
T_{\rho}1_{A_{i}}(cN+tw)
=\int_{\alpha'}^{\infty}\int_{\beta' s+\gamma'+c't}^{\infty}e^{-r^{2}/2}e^{-s^{2}/2}drds/2\pi.
$$

For $s,t\in\R$, define
\begin{equation}\label{two1}
\phi(s,t)\colonequals
\int_{\beta' s+\gamma'+c't}^{\infty}e^{-r^{2}/2}dr/\sqrt{2\pi}
=\frac{1}{2}-\int_{0}^{\beta' s+\gamma'+c't}e^{-r^{2}/2}dr/\sqrt{2\pi}.
\end{equation}
For fixed $s\in\R$, $\phi(s,t)$ is a holomorphic function in $t$.  For $z\in\C$ and fixed $s\in\R$, define the following function, where we interpret the integral as a contour integral.
\begin{equation}\label{two2}
g(z)\colonequals\int_{0}^{\beta' s+\gamma'+c'z}e^{-r^{2}/2}dr/\sqrt{2\pi}.
\end{equation}
Since $e^{-r^{2}/2}$ is an entire function of $r\in\C$ with no poles, the choice of the path in the definition of $g$ does not matter.  In particular, for $z=(x_{1},x_{2})\in\C$, if we choose the path that first moves from $0$ to $(\beta' s+\gamma' +c'x_{1},0)$ along a straight line, and then from $(\beta' s+\gamma' +c'x_{1},0)$ to $(\beta' s+\gamma' +c'x_{1},c'x_{2})$ along a straight line, we get from \eqref{two2} the bound
\begin{equation}\label{two3}
\abs{g(z)}\leq \int_{0}^{\infty}e^{-r^{2}/2}\frac{dr}{\sqrt{2\pi}}+\absf{\int_{0}^{c'x_{2}}e^{-(\beta' s+\gamma'+c' x_{1}+ir)^{2}/2}dr}
\leq\frac{1}{2}+\int_{0}^{c'x_{2}}e^{r^{2}/2}dr
\leq\frac{1}{2}+c'\abs{x_{2}}e^{c'x_{2}^{2}/2}.
\end{equation}
In the penultimate estimate, we used $\absf{e^{-(x_{1}+ix_{2})^{2}}}\leq\absf{e^{x_{2}^{2}}}$.  Combining \eqref{two3} and \eqref{two1},
\begin{equation}\label{two4}
\abs{\phi(s,(x_{1},x_{2}))}\leq1+c'\abs{x_{2}}e^{c'x_{2}^{2}/2}.
\end{equation}

Consider the function $f_{\epsilon}(z)\colonequals\int_{\alpha}^{1/\epsilon}\phi(s,z)e^{-s^{2}/2}ds$, $z\in\C$.  This function is an entire function of $z\in\C$, by e.g. \cite[Theorem 5.4]{stein03}.  To prove that $f(z)\colonequals T_{\rho}1_{A_{i}}(cN+zw)$ is holomorphic, It now suffices to show that $f_{\epsilon}$ converges uniformly to $f$ on any compact subset of $\C$, as $\epsilon\to0$.  So, let $z=(x_{1},x_{2})\in\C$.  From \eqref{two4},
\begin{equation}\label{two5}
\begin{aligned}
\abs{f_{\epsilon}(z)-f(z)}
&\leq\int_{1/\epsilon}^{\infty}\abs{\phi(s,z)}e^{-s^{2}/2}ds\\
&\leq\int_{1/\epsilon}^{\infty}(1+c'\abs{x_{2}}e^{c'x_{2}^{2}/2})e^{-s^{2}/2}ds\leq(1+c'\abs{x_{2}}e^{c'x_{2}^{2}/2})e^{-1/(2\epsilon^{2})}.
\end{aligned}
\end{equation}
So, \eqref{two5} gives the desired uniform convergence property, completing the proof in the case $n=2$.  Note that, by \eqref{two4}
\begin{equation}\label{two6}
\abs{f(z)}\leq \int_{\alpha}^{\infty}\abs{\phi(s,z)}e^{-s^{2}/2}ds
\leq1+c'\abs{x_{2}}e^{c'x_{2}^{2}/2}
\end{equation}

For the more general case, we induct on $n$.  We are given an $n$-dimensional polyhedral cone $\Sigma_{n}$ with $n+1$ facets, and we write $\Sigma_{n}$ as a union of translates of an $(n-1)$-dimensional simplicial cone $\Sigma_{n-1}$, which can be taken to be one of the facets of $\Sigma_{n}$.  That is, there exists $u\in\R^{n}$ such that $\Sigma_{n}=\cup_{s>\alpha}(\Sigma_{n-1}+su)$.  So,
$$T_{\rho}1_{\Sigma_{n}}(cN+tw)=\int_{\alpha'}^{\infty}\int_{\Sigma_{n-1}+wt}d\gamma_{n-1}e^{-s^{2}/2}ds/\sqrt{2\pi}
\equalscolon\int_{\alpha'}^{\infty}\psi(s,t)e^{-s^{2}/2}ds/\sqrt{2\pi}.$$
Now, $\psi(s,t)$ is known, by the inductive hypothesis, to be holomorphic in $t=(x_{1},x_{2})$, for each $s$.  Moreover, by the inductive hypothesis, $\abs{\psi(s,(x_{1},x_{2}))}\leq1+c'\abs{x_{2}}e^{c'x_{2}^{2}/2}$.  We therefore truncate and make a tail estimate as in \eqref{two5} to prove uniform convergence of $f_{\epsilon}(z)\colonequals\int_{1/\epsilon}^{\infty}\Psi(s,z)e^{-s^{2}/2}ds$ to $f(z)\colonequals T_{\rho}1_{A_{i}}(cn+zw)$.  Finally, showing the following bound completes the inductive step.
$$\abs{f(z)}\leq\int_{\alpha}^{\infty}\abs{\Psi(s,z)}e^{-s^{2}/2}ds/\sqrt{2\pi}\leq 1+c'\abs{x_{2}}e^{c'x_{2}^{2}/2}.$$
\end{proof}
%

\section{The Main Theorem}\label{secmain}

Corollary \ref{cor4} below implies the Main Theorem \ref{thm1}.

\begin{cor}(\embolden{An Optimal Flat Partition Must be Centered})\label{cor5}
\begin{itemize}
\item[(1)]  Let $\rho\in(0,1)$.  Suppose $\{A_{i}\}_{i=1}^{n+1}$ is a partition of $\R^{n}$ satisfying
$$
S_{\rho}(\{A_{i}\}_{i=1}^{n+1})=
\sup_{\partsub}S_{\rho}(\{B_{i}\}_{i=1}^{n+1}).
$$
Suppose Assumption \ref{as1} holds.  Then $y=0$ in Assumption \ref{as1} (and in Definition \ref{flatdef}).
\item[(2)]   Let $\rho\in(-1,0)$.  Suppose $\{A_{i}\}_{i=1}^{n+1}$ is a partition of $\R^{n}$ satisfying
$$
S_{\rho}(\{A_{i}\}_{i=1}^{n+1})=
\inf_{\partsub}S_{\rho}(\{B_{i}\}_{i=1}^{n+1}).
$$
Suppose Assumption \ref{as1} holds.  Then $y=0$ in Assumption \ref{as1} (and in Definition \ref{flatdef}).
\end{itemize}
\end{cor}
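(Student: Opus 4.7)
The plan is to combine the first-variation obstruction of Lemma \ref{lemma1.1} with the analyticity result of Lemma \ref{lemma4} via the classical identity principle for holomorphic functions. I focus on the case $\rho \in (0,1)$; the case $\rho \in (-1,0)$ is entirely analogous.

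First, assuming $\{A_i\}_{i=1}^{n+1}$ achieves the supremum (or, in the second case, infimum), the contrapositive of Lemma \ref{lemma1.1} forces $T_{\rho}(1_{A_i} - 1_{A_j})$ to be constant on $A_i \cap A_j$ for every pair $(i,j)$ that shares a common facet: otherwise the normal variation constructed there would strictly improve noise stability, contradicting optimality. In particular, for the pair $(i,j)$ furnished by Assumption \ref{as1}, the one-variable restriction $f(t) := T_{\rho}(1_{A_i} - 1_{A_j})(cN + tw)$ is constant on $[1,\infty)$, since Assumption \ref{as1} places that ray inside $A_i \cap A_j$.

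Second, Lemma \ref{lemma4}(b) says $f$ extends to an entire function on $\mathbb{C}$. Because $[1,\infty)$ has accumulation points in $\mathbb{C}$, the identity theorem forces this extension to be globally constant on $\mathbb{C}$, hence $f$ is constant on $\mathbb{R}$. Consequently $\lim_{t \to \infty} f(t) = \lim_{t \to -\infty} f(t)$, and Lemma \ref{lemma4}(a) then yields $c = 0$, i.e.\ the hyperplane $\Pi$ separating $A_i$ and $A_j$ passes through the origin.

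To promote $c = 0$ for this single pair to the global conclusion $y = 0$, I apply the same argument to every pair $(i',j')$ sharing a facet: by Definition \ref{flatdef}, the hyperplane between $A_{i'}$ and $A_{j'}$ is $\{x \in \mathbb{R}^n : \langle x - y,\, y_{i'} - y_{j'} \rangle = 0\}$, which contains the origin exactly when $\langle y,\, y_{i'} - y_{j'} \rangle = 0$. For a non-degenerate flat partition of $\mathbb{R}^n$ into $n+1$ parts, the $y_i$ affinely span $\mathbb{R}^n$ and the facet graph is connected, so the collection $\{y_{i'} - y_{j'}\}$ over facet-sharing pairs linearly spans $\mathbb{R}^n$, forcing $y = 0$. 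The conceptual heart of the argument---that the nonlocal operator $T_\rho$ applied to an indicator of a polyhedral cone produces a line-restriction that is \emph{globally} holomorphic---is already delivered by Lemma \ref{lemma4}, so I do not expect any serious technical obstacle beyond what Lemmas \ref{lemma1.1} and \ref{lemma4} already provide; the remainder is unique continuation via the identity theorem, together with the elementary linear algebra above.
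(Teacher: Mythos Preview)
Your proposal is correct and follows essentially the same approach as the paper: combine the contrapositive of Lemma~\ref{lemma1.1} with Lemma~\ref{lemma4} via the identity theorem for holomorphic functions to force $c=0$ for each facet-sharing pair. Your final linear-algebra step promoting ``$c=0$ for every pair'' to ``$y=0$'' is actually more explicit than the paper's own proof, which treats this passage as evident.
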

\begin{proof}
Fix $i$ and $j$ in $\{1, \ldots, n+1\}$, and recall the definition of $c$ from Assumption~\ref{as1}. It suffices to show that if $c \neq 0$ then the partition $\{A_i\}_{i=1}^{n+1}$ is not optimal. Suppose, therefore, that $c \neq 0$. Then $T_{\rho}(1_{A_{i}}-1_{A_{j}})$ is not constant on any segment contained in $A_{i}\cap A_{j}$ by Lemma \ref{lemma4}. We then apply Lemma \ref{lemma1.1} to see that the partition is not optimal.
\end{proof}

\begin{cor}(\embolden{Biased Optimizers are not Flat})\label{cor4}
Let $\{A_{i}\}_{i=1}^{n+1}$ be a partition of $\R^{n}$ with volumes $a=(a_{1},\ldots,a_{n+1})$.
\begin{itemize}
\item[(1)]  Let $\rho\in(0,1)$.  Suppose $\{A_{i}\}_{i=1}^{n+1}$ is a partition of $\R^{n}$ satisfying
$$
S_{\rho}(\{A_{i}\}_{i=1}^{n+1})
=\sup_{\partsub}S_{\rho}(\{B_{i}\}_{i=1}^{n+1}).
$$
If Assumption \ref{as1} also holds, then we must have $a=(1/(n+1),\ldots,1/(n+1))$.
\item[(2)]   Let $\rho\in(-1,0)$.  Suppose $\{A_{i}\}_{i=1}^{n+1}$ is a partition of $\R^{n}$ satisfying
$$
S_{\rho}(\{A_{i}\}_{i=1}^{n+1})
=\inf_{\partsub}S_{\rho}(\{B_{i}\}_{i=1}^{n+1}).
$$
If Assumption \ref{as1} also holds, then we must have $a=(1/(n+1),\ldots,1/(n+1))$.
\end{itemize}
\end{cor}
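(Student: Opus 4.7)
The plan is to combine Corollary~\ref{cor5} with a direct evaluation of $T_{\rho}$ at the cone apex. First, I would apply Corollary~\ref{cor5} across every pair of cells sharing a common facet. Each such application forces the corresponding $c$ (in Assumption~\ref{as1}) to vanish, i.e.~each separating hyperplane passes through the origin. Equivalently, one may take $y=0$ in Definition~\ref{flatdef}, so every $A_{k}$ is a cone with apex at the origin; in particular the hyperplane $\Pi$ in Assumption~\ref{as1} passes through $0$.

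Next, suppose for contradiction that $a \neq (1/(n+1),\ldots,1/(n+1))$. Because the adjacency graph of a non-degenerate flat partition of $\R^{n}$ into $n+1$ cells is connected, some adjacent pair $(i,j)$ must satisfy $a_{i}\neq a_{j}$; without loss of generality this is the pair fixed in Assumption~\ref{as1}. The key observation is that each $A_{k}$ is a cone with apex at $0$, hence invariant under positive dilation by $\sqrt{1-\rho^{2}}$, which gives
\[
T_{\rho}1_{A_{k}}(0) \;=\; \int_{\R^{n}} 1_{A_{k}}\!\bigl(y\sqrt{1-\rho^{2}}\bigr)\,d\gamma_{n}(y) \;=\; \int_{\R^{n}} 1_{A_{k}}(y)\,d\gamma_{n}(y) \;=\; a_{k}.
\]
Therefore $T_{\rho}(1_{A_{i}}-1_{A_{j}})(0) = a_{i}-a_{j} \neq 0$.

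On the other hand, Lemma~\ref{lemma4}(a) with $c=0$ yields $\lim_{t\to\infty}T_{\rho}(1_{A_{i}}-1_{A_{j}})(tw)=0$ along the line $L=\{tw:t\in\R\}$ from Assumption~\ref{as1}. Since $A_{i}\cap A_{j}$ is a cone with apex $0$ containing $w$, the entire ray $\{tw:t\geq 0\}$ is contained in $A_{i}\cap A_{j}$; by continuity of $T_{\rho}(1_{A_{i}}-1_{A_{j}})$, its value on this ray approaches $a_{i}-a_{j}\neq 0$ as $t\to 0^{+}$ and tends to $0$ as $t\to\infty$. Hence $T_{\rho}(1_{A_{i}}-1_{A_{j}})$ is not constant on $A_{i}\cap A_{j}$, and Lemma~\ref{lemma1.1} exhibits a volume-preserving perturbation that strictly improves the noise stability for $\rho\in(0,1)$ (and strictly decreases it for $\rho\in(-1,0)$), contradicting optimality.

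The main subtlety I anticipate is the passage from Corollary~\ref{cor5}, whose direct output concerns a single facet via Assumption~\ref{as1}, to the global centeredness $y=0$ of the whole partition. This requires invoking the corollary simultaneously for enough adjacent facets so that the joint constraints $\langle y,y_{i}-y_{j}\rangle=0$ force $y=0$; one uses that the differences $\{y_{i}-y_{j}\}$ taken over adjacent pairs span $\R^{n}$ in any non-degenerate flat partition of $\R^{n}$ into $n+1$ cells. Once centeredness is in hand, the rest of the argument reduces to the clean identity $T_{\rho}1_{A_{k}}(0)=a_{k}$ and the vanishing-at-infinity statement of Lemma~\ref{lemma4}(a).
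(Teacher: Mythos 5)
Your argument is correct and is essentially the paper's own proof, just arranged as a contradiction: both use Corollary~\ref{cor5} to center the partition, the cone scale-invariance identity $T_{\rho}1_{A_{p}}(0)=a_{p}$, the vanishing of $T_{\rho}(1_{A_{i}}-1_{A_{j}})$ along the ray (Lemma~\ref{lemma4}(a)/\eqref{zert}), and Lemma~\ref{lemma1.1} to contradict optimality. Your extra care about applying the centeredness conclusion to enough facets so that $y=0$ follows is a reasonable (and slightly more explicit) reading of the same step in the paper.
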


\begin{proof}
Suppose that $\{A_i\}_{i=1}^{n+1}$ is a flat, optimal partition; we will show that it must be unbiased. First, Corollary~\ref{cor5} implies that $0 \in A_p$ for all $p \in \{1, \ldots, n+1\}$. Then, by Lemma \ref{lemma1.1} and \eqref{zert}, we have $T_{\rho}(1_{A_{i}}-1_{A_{j}})(0)=0$ for all $i, j \in \{1, \ldots, n+1\}$. Since each $A_p$ is a cone, its scale-invariance and the definition of $T_\rho$ imply that $\gamma_n(A_p) = (T_\rho 1_{A_p})(0)$; hence, $\gamma_{n}(A_{i})=\gamma_{n}(A_{j})$ for all $i,j\in\{1,\ldots,n+1\}$ with $i\neq j$.
\end{proof}

\section{Voting Interpretation}\label{secvoting}
As mentioned earlier our results imply that, generically speaking, plurality is not the most noise stable discrete function.
Gaussian space may be obtained as a limit of discrete spaces in multiple ways, each resulting in a somewhat different statement of Plurality is not stablest, via Corollary \ref{cor4}. For concreteness we prove Corollary~\ref{cor:plu}.
\begin{proof}[Proof of Cor. \ref{cor:plu}]
The proof follows by the Central Limit Theorem.  Let $f_{n}\colon\{1,2,3\}^{n}\to\{1,2,3\}$ be the sequence of plurality functions from Definition \ref{plurdef} and let $(\alpha,\beta)\neq(0,0)$.  In order to prove Corollary~\ref{cor:plu} we first claim that there exists a shifted flat partition $A=\{A_1,A_2,A_3\}$
of $\R^2$ such that
\[
\lim_{n \to \infty} \mathrm{P}[f_n = i] = \gamma_{2}(A_i), \forall\,i \in \{1,2,3\},
\]
and moreover
\[
\lim_{n \to \infty} \mathrm{P}[f_n(x) = f_n(y)] = \sum_{i=1}^3 \mathrm{P}[X \in A_i, Y \in A_i].
\]
This follows from the central limit theorem, noting that the vector $(X_i^{n})_{i=1}^3\in\R^{3}$ given by
\[
X_i^n(\omega_{1},\ldots,\omega_{n}) \colonequals n^{-1/2}  \sum_{j=1}^n (1(\omega_j = i) - 1/3),\,\,i=1,2,3
\]
converges to a centered normal vector $(N_i)_{i=1}^3\in\R^{3}$ with variances $2/9$ and covariances
$-2/3$ as $n\to\infty$. Moreover the partition $\{(X_i^{n})_{i=1}^3\colon f_n = 1\},\{(X_i^{n})_{i=1}^3\colon f_n = 2\}, \{(X_i^{n})_{i=1}^3\colon f_n = 3\}$ converges in the weak $L_{1}(\gamma_{2})$ norm to the sets given by
\[
A_i= \{ x\in\R^{2}:  i= \mathrm{argmax}_{j= 1,2,3} (N_1(x) + \alpha, N_2(x) + \beta, N_3(x) - \alpha - \beta) \}.
\]
In other words, $\{A_1,A_2,A_3\}$ is a shifted standard simplex.

Let $\{B_1,B_2,B_3\}$ be a partition of $\R^2$ which satisfies $\gamma_{2}(A_i) = \gamma_{2}(B_i)$ for all $i=1,2,3$ and
\[
\sum_{i=1}^3 \mathrm{P}[X \in A_i, Y \in A_i] < \sum_{i=1}^3 \mathrm{P}[X \in B_i, Y \in B_i].
\]
Note that $\{B_1,B_2,B_3\}$ exists by Theorem \ref{thm1}.  By approximating $\{B_{1},B_{2},B_{3}\}$ by a finite number of axis-parallel rectangles, we may assume that $\{B_{1},B_{2},B_{3}\}$ consists of a finite union of axis-parallel rectangles.

Consider now $\tilde{g}_n\colon\{1,2,3\}^{n}\to\{1,2,3\}$ satisfying $\tilde{g}_n = i$ if $(X_1^n, X_2^n, X_3^n) \in B_i$.  Since $\{B_{1},B_{2},B_{3}\}$ consists of a finite number of axis-parallel rectangles, Definition \ref{infdef} shows that $\max_{i=1,\ldots,n}I_{i}\tilde{g}_{n} = O(n^{-1/2})$.  Moreover by the Central Limit Theorem, we see that
\[
|\mathrm{P}[\tilde{g}_n = i] - \gamma_{2}(B_i)| = O(n^{-1/2}), \quad
|\mathrm{P}[\tilde{f}_n = i] - \gamma_{2}(A_i)| = O(n^{-1/2}).
\]
and
\[
\lim_{n \to \infty} \mathrm{P}[\tilde{g}_n(x) = \tilde{g}_n(y)] = \sum_{i=1}^3 \mathrm{P}[X \in B_i, Y \in B_i] >
\sum_{i=1}^3 \mathrm{P}[X \in A_i, Y \in A_i]
\]
as needed.

%
\end{proof}

\ignore{
\subsection{Analysis on discrete spaces}

Let $n\geq2,k\geq3$.  Let $P$ be a probability distribution over $[k]$ and assume that $P(\sigma) > \eta > 0$ for al
$\sigma$.
Let $(W_{1},\ldots,W_{k})$ be an orthonormal basis for the space of functions $\{g\colon\{1,\ldots,k\}\to[0,1]\}$ equipped with the inner product
$\langle g,h\rangle_{k}\colonequals\frac{1}{k}\sum_{\sigma\in\{1,\ldots,k\}} P(\sigma) g(\sigma)h(\sigma)$.
Assume that $W_{1}=1$.  Since $\{W_{1},\ldots,W_{k}\}$ is an orthonormal basis, for every $\sigma\in\{1,\ldots,k\}$, there exists $\widehat{g}(\sigma)\in\mathbb{R}$ such that $g=\sum_{\sigma\in\{1,\ldots,k\}}\widehat{g}(\sigma)W_{\sigma}$.  Define
$$\textstyle\Delta_{k}\colonequals\{(x_{1},\ldots,x_{k})\in\mathbb{R}^{k}\colon\forall\,1\leq i\leq k, 0\leq x_{i}\leq1,\sum_{i=1}^{k}x_{i}=1\}.$$
Let $f\colon\{1,\ldots,k\}^{n}\to\Delta_{k}$, $f=(f_{1},\ldots,f_{k})$, $f_{i}\colon\{1,\ldots,k\}^{n}\to[0,1]$, $i\in\{1,\ldots,k\}$.  Let $\sigma=(\sigma_{1},\ldots,\sigma_{n})\in\{1,\ldots,k\}^{n}$.  Define $W_{\sigma}\colonequals\prod_{i=1}^{n}W_{\sigma_{i}}$, and let $\abs{\sigma}\colonequals\abs{\{i\in\{1,\ldots,n\}\colon\sigma_{i}\neq1\}}$.  Then for every $\sigma\in\{1,\ldots,k\}^{n}$ there exists $\widehat{f_{i}}(\sigma)\in\mathbb{R}$ such that $f_{i}=\sum_{\sigma\in\{1,\ldots,k\}^{n}}\widehat{f_{i}}(\sigma)W_{\sigma}$, $i\in\{1,\ldots,k\}$.  For $\rho\in[-1,1]$ and $i\in\{1,\ldots,k\}$, define
$$
T_{\rho}f_{i}\colonequals\sum_{\sigma\in\{1,\ldots,k\}^{n}}\rho^{\abs{\sigma}}\widehat{f_{i}}(\sigma)W_{\sigma},\quad
T_{\rho}f\colonequals(T_{\rho}f_{1},\ldots,T_{\rho}f_{k})\in\mathbb{R}^{k}.
$$

Equivalently, if $\rho>0$, given $\omega=(\omega_{1},\ldots,\omega_{n})\in\{1,\ldots,k\}^{n}$, define a random variable $\lambda(\omega)=(\lambda_{1},\ldots,\lambda_{n})\in\{1,\ldots,k\}^{n}$ so that $\lambda_{i}$ is independent of $\lambda_{j}$ for $i\neq j$, $i,j\in\{1,\ldots,n\}$, for all $i=1,\ldots,n$, $\lambda_{i}=\omega_{i}$ with probability $\rho$, and $\lambda_{i}$ is chosen uniformly at random from $\{1,\ldots,k\}$ with probability $(1-\rho)$.  Then $T_{\rho}f_{i}(\omega)=\mathbb{E} f_{i}(\lambda(\omega))$.

\begin{theorem}\label{thm3}\cite[Theorem 7.4]{isaksson11}
For any $k\geq2$, $n\geq1$, $\rho\in(-1,1)$, $\epsilon>0$, $\tau>0$ and $g\colon\R^{n}\to\{e_{1},\ldots,e_{k}\}\subset\R^{k}$, there exists an $M>0$ such that, for all $m>M$, there exists $f\colon\{1,\ldots,k\}^{m}\to\Delta_{k}$ such that $\sum_{\sigma\in\{1,\ldots,k\}^{n}\colon\sigma_{j}\neq1}(\widehat{f_{i}}(\sigma))^{2}\leq\tau$ for all $i=1,\ldots,m$, $j=1,\ldots,k$, such that $\sum_{\sigma\in\{1,\ldots,k\}^{m}}f(\sigma)=\int gd\gamma_{n}$, and such that
\begin{equation}\label{five1}
\bigg|\sum_{i=1}^{k}k^{-m}\sum_{\sigma\in\{1,\ldots,k\}^{m}}f_{i}(\sigma)T_{\rho}f_{i}(\sigma)-\sum_{i=1}^{k}\int g_{i}T_{\rho}g_{i}d\gamma_{n}\bigg|\leq\epsilon.
\end{equation}
\end{theorem}
The proof in~\cite{isaksson11} is given for the uniform probability measure on $[k]$ but the same proof extends to general spaces. Moreover,

\subsection{Biased Plurality is Not Stablest}


Let $\rho\in(-1,1)$, $\tau>0$.  Let $\{A_{i}\}_{i=1}^{n+1}$ be a partition of $\R^{n}$ with volumes $(a_{1},\ldots,a_{n+1})$ and define $g\colon\R^{n}\to\{e_{1},\ldots,e_{n+1}\}$ so that $g(x)=\sum_{i=1}^{n+1}e_{i}1_{A_{i}}(x)$.  By Theorem \ref{thm3}, there exists $M(\rho,\epsilon)$ such that, for all $m>M(\rho,\epsilon)$, there exists $f_{\epsilon}\colon\{1,\ldots,n+1\}^{m}\to\Delta_{n+1}$ such that \eqref{five1} holds.

If $(A_{1},\ldots,A_{n+1})$ a flat partition, then the function $f_{\epsilon}$ as just defined is called a \textit{plurality function}.  If $a\neq(1/(n+1),\ldots,1/(n+1))$, then Corollary \ref{cor4} implies that a plurality function is not optimal as $m\to\infty$.  In particular, let $(A_{1},\ldots,A_{n+1})$ be the partition guaranteed to exist by Corollary \ref{cor4}.  Define
$$S\colonequals\sup_{\partsub}\sum_{i=1}^{n+1}\int_{\R^{n}}1_{B_{i}}T_{\rho}1_{B_{i}}d\gamma_{n}.$$
\begin{equation}\label{five2}
S'\colonequals\sup_{\flatsub}
\sum_{i=1}^{n+1}\int_{\R^{n}}1_{B_{i}}T_{\rho}1_{B_{i}}d\gamma_{n}.
\end{equation}
Finally, define $\epsilon\colonequals (1/3)(S-S')$.

Let $\widetilde{g}$ be any function as in the supremum in \eqref{five2}.  From Corollary \ref{cor4}, $\epsilon>0$.  From Theorem \ref{thm3}, there exists $M(\epsilon,\rho)$ such that, for all $m>M(\epsilon,\rho)$, there exists $f$ such that $f$ and $g$ satisfy \eqref{five1}, and there exists a $\widetilde{f}$ such that $\widetilde{f}$ and $\widetilde{g}$ satisfy \eqref{five1}.  Then, by the triangle inequality we conclude that
$$\sum_{i=1}^{k}k^{-m}\sum_{\sigma\in\{1,\ldots,k\}^{m}}f_{i}(\sigma)T_{\rho}f_{i}(\sigma)
>\sum_{i=1}^{k}k^{-m}\sum_{\sigma\in\{1,\ldots,k\}^{m}}\widetilde{f}_{i}(\sigma)T_{\rho}\widetilde{f}_{i}(\sigma).$$
So, any discrete function $\widetilde{f}$ which corresponds to a biased plurality function by the application of Theorem \ref{thm3} will not be the most stable biased discrete function, for sufficiently large $m$.

}

\medskip

\noindent{\textbf{Acknowledgement.}} Thanks to Oded Regev for reading and commenting on the manuscript.  Thanks to Ronen Eldan, Kostya Makarychev, and Assaf Naor for helpful comments. This work was mostly conducted during the semester on ``Real Analysis in Computer Science'' at the Simons Institute for Theoretical Computer Science. We are grateful to the institute for its hospitality and support.

\bibliographystyle{abbrv}
\bibliography{12162011}

\end{document}